\documentclass[11pt]{article}
\usepackage[margin=1.2in]{geometry}
\usepackage{graphicx}
\usepackage{amssymb,amsmath,amsthm}
\usepackage{tikz}
\usetikzlibrary{arrows.meta}
\usepackage{authblk}

\newtheorem{theorem}{Theorem}
\newtheorem{lemma}{Lemma}
\newtheorem{definition}{Definition}

\newcommand{\ceil}[1]{\left\lceil#1\right\rceil}
\newcommand{\set}[1]{\left\{#1\right\}}
\newcommand{\power}[2]{\left(#1\right)^{#2}}

\title{The Speed and Threshold of the Biased Perfect Matching Game}

\date{\today}

\author[1]{Noah Brustle}
\author[1]{Sarah Clusiau}
\author[1]{Vishnu V. Narayan}
\author[2]{Ndiam\'{e} Ndiaye}
\author[1]{Bruce Reed}
\author[3,4]{Ben Seamone\thanks{Emails:\\ \texttt{noah.brustle@mail.mcgill.ca, sarah.clusiau@mail.mcgill.ca,}\\ \texttt{vishnu.narayan@mail.mcgill.ca, ndiame.ndiaye@mail.mcgill.ca,}\\ \texttt{breed@cs.mcgill.ca, seamone@iro.umontreal.ca}}}
\affil[1]{School of Computer Science, McGill University, Montreal, Canada}
\affil[2]{Department of Mathematics and Statistics, McGill University, Montreal, Canada}
\affil[3]{Mathematics Department, Dawson College, Montreal, Canada}
\affil[4]{D\'{e}partement d'informatique et de recherche op\'{e}rationnelle, Universit\'{e} de Montr\'{e}al, Montreal, Canada}

\begin{document}
  \maketitle
\begin{abstract} 
  We show that Maker wins the Maker-Breaker perfect matching game in $\frac{n}{2}+o(n)$ turns when the bias is at least $\frac{n}{\log{n}}-\frac{f(n)n}{(\log{n})^{5/4}}$, for any $f$ going to infinity with $n$ and $n$ sufficiently large (in terms of $f$).
\end{abstract}

\section{Introduction}\label{s:intro}
The (Maker-Breaker) perfect matching game on a graph $G$ with an even number of vertices is played by two players who alternately select edges from $G$, with Breaker choosing first.
Maker wins when she has chosen the edges of a perfect matching. If she never does so, Breaker wins. The (Maker-Breaker) Hamilton cycle game is defined 
analogously. We restrict our attention, and the definition of these games, to graphs $G$ that are cliques. 

Chv\'{a}tal and Erd\H{o}s~\cite{CE78} proved that for sufficiently large $n$ and $G$ a clique on $n$ vertices, 
if both players play optimally then Maker can ensure she wins the Hamilton cycle game in her first $2n$ moves. Since the edges of every cycle with an even number 
of vertices can be partitioned into two matchings this means that Maker will also win the perfect matching game for even $n$.

Chv\'{a}tal and Erd\H{o}s~\cite{CE78} also introduced the biased version of such games, where for some integer $b$, Breaker selects $b$ edges in each turn. They showed that for any positive $\epsilon$, for $n$ sufficiently large, if $b>\frac{(1+\epsilon)n}{\log{n}}$ then, 
Breaker can ensure that he selects all edges incident to one of the vertices. For such values of $b$, Breaker wins both the  biased Hamilton cycle  game and the  biased perfect matching game. Krivelevich~\cite{Kri11} obtained an essentially matching lower bound, showing that for $b<\frac{(1-\epsilon)n}{\log{n}}$  
Maker  wins the biased Hamilton cycle game, and hence also the biased perfect matching game\footnote{A slightly stronger result is stated in \cite{Kri11}, but a careful reading of the proof 
shows that the result stated here is actually what is proven. In particular the use of Lemma 3 stated therein means the approach given there can do no better.}. 

For Maker to win the (unbiased) perfect matching game she must make at least $\frac{n}{2}$ moves as to obtain the edges of the matching. Indeed, Breaker can prevent Maker from just choosing 
the edges of a matching by stealing the last edge, so Maker cannot ensure she wins in fewer than  $\frac{n}{2}+1$ moves. Hefetz et al.~\cite{HKS09} showed that this lower bound is tight; i.e., they proved that when $n$ is sufficiently large Maker can win in $\frac{n}{2}+1$ moves. 

In the same vein, Maker will need at least n+1 moves to win the unbiased Hamilton cycle game, and Hefetz and Stich~\cite{HS09} proved that she can always do so. 

In this paper we focus on the number of moves Maker needs to win the biased perfect matching game. This question has been previously studied by 
Mikala\v{c}ki and Stojakovi\'{c}~\cite{MS17}. They showed that there is some $\delta>0$ such that if $b<\frac{\delta n}{\log{n}}$, then Maker can win in $(1+o(1))\frac{n}{2}$ moves. 
They asked whether this remains true for larger values of $b$. We answer this question in the affirmative by showing:

\begin{theorem}
\label{themaintheorem}
For every $f(n)$ which is $\omega(1)$ and every sufficiently large $n$, if  $b<\frac{n}{\log{n}}-\frac{f(n) n}{(\log{n})^{5/4}}$ then Maker can win the biased perfect matching game in $\frac{(1+o(1))n}{2}$ steps. 
\end{theorem}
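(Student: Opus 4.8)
The plan is to combine two ingredients: a near-optimal strategy for Maker when the bias is genuinely smaller than the critical value, and Krivelevich's result guaranteeing that Maker wins (without a speed constraint) all the way up to bias $(1-\epsilon)n/\log n$. The guiding idea is that the ``slack'' of order $f(n)n/(\log n)^{5/4}$ below the threshold $n/\log n$ is enough to let Maker spend a negligible fraction of her moves on cleanup while still winning the bulk of the matching quickly. Concretely, I would partition the vertex set of the clique $K_n$ into a ``main part'' $A$ of size $n - m$ and a small ``reserve'' $B$ of size $m$, where $m = m(n)$ is chosen to be $o(n)$ but still large enough (something like $n/(\log n)^{3/2}$ or a similar polylogarithmic shave) that the induced game on $A$ has an effective bias strictly below its own critical threshold: since the threshold on $K_{n-m}$ is $(n-m)/\log(n-m)$, and $b < n/\log n - f(n)n/(\log n)^{5/4}$, for the right choice of $m$ we get $b \le (1-\epsilon)(n-m)/\log(n-m)$ for some fixed $\epsilon>0$. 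This is where the peculiar exponent $5/4$ must enter: one has to expand $n/\log(n-m)$ and check that the $f(n)n/(\log n)^{5/4}$ term dominates the error incurred by replacing $n$ with $n-m$ and by the distinction between a constant-factor and an additive lower-order gap.

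Next I would invoke the Mikala\v{c}ki--Stojako\-vi\'c fast-matching strategy (or the Hefetz et al.\ fast-matching ideas) on the subgame restricted to $A$: since the bias there is below the critical value by a constant factor, Maker can build a perfect matching of $A$ (well, of all but a bounded number of vertices of $A$, since $|A|$ need not be even — I would simply set aside an extra vertex or two) in $(1+o(1))(n-m)/2 = (1+o(1))n/2$ of her own moves, ignoring any edges Breaker claims that touch $B$. The remaining task is to match the $m$ reserve vertices of $B$, together with at most a couple of leftover vertices of $A$, using a perfect matching inside the (now mostly untouched) clique on $B$. Here I would apply Krivelevich's theorem directly to $K_m$ (or a clique on $m + O(1)$ vertices): the bias $b$ satisfies $b < n/\log n$, and I need $b < (1-\epsilon) m / \log m$; this forces $m$ to be not too small — roughly $m \gtrsim b\log m \approx (n/\log n)\log m$, i.e.\ $m$ at least of order $n$ up to polylog factors — so in fact $m$ cannot be chosen freely as $o(n)$ by an arbitrarily large factor, and the two constraints on $m$ (small enough that the $A$-game is sub-critical with room to spare, large enough that the $B$-game is winnable) must be reconciled. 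Reconciling them is exactly the delicate quantitative step, and it is what pins down the form $n/\log n - \Theta(f(n)n/(\log n)^{5/4})$.

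The main obstacle, then, is this balancing act: choosing $m$ (and the precise way $A$ and $B$ interact — in particular whether Maker really can afford to completely ignore $B$-edges during the main phase, or whether Breaker's moves in $B$ must be tracked and $B$ enlarged adaptively) so that both subgames are simultaneously in the regime where the cited theorems apply, while keeping the total move count at $(1+o(1))n/2$. A secondary technical point is handling parity and the ``gluing'': after the fast phase, a small number of vertices of $A$ may remain unmatched, and Maker must absorb them into the $B$-phase, which means the $B$-phase is really a perfect matching game on $B$ plus $O(1)$ extra vertices, with some edges between them possibly already taken by Breaker; one must check Krivelevich's strategy is robust to a bounded amount of such interference, or avoid the issue by reserving those vertices inside $B$ from the start. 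Finally I would assemble the bound: the main phase costs $(1+o(1))n/2$ moves, the cleanup phase costs at most $|B|/2 + O(1) = o(n)$ moves (this is where $m = o(n)$ is needed, giving a second lower-order constraint on $m$), so the total is $(1+o(1))n/2$, as required. I expect the write-up to be dominated by the estimates verifying the admissible window for $m$ is nonempty precisely when the bias is below $n/\log n - f(n)n/(\log n)^{5/4}$.
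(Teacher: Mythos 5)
There is a genuine gap, and it is fatal to the whole framework rather than a fixable technicality: the admissible window for $m$ that you hope to exhibit is empty. Your cleanup phase needs Krivelevich's theorem on a clique of $m+O(1)$ vertices, i.e.\ $b<(1-\epsilon)m/\log m$ for some fixed $\epsilon>0$. But the bias in the theorem is $b=\bigl(1-f(n)/(\log n)^{1/4}\bigr)\frac{n}{\log n}=(1-o(1))\frac{n}{\log n}$, while $x/\log x$ is increasing, so for \emph{every} $m\le n$ one has $(1-\epsilon)\frac{m}{\log m}\le(1-\epsilon)\frac{n}{\log n}<b$ once $n$ is large. So Krivelevich's result cannot be invoked for the $B$-game for any choice of $m$ whatsoever --- indeed the theorem being proved is strictly stronger than Krivelevich's bound, which is why the paper notes it improves his bias threshold. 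Worse, with $m=o(n)$ (which you need so the cleanup costs $o(n)$ moves) we have $b=\omega(m/\log m)$, and then by Chv\'atal--Erd\H{o}s Breaker simply isolates a vertex of $B$ and wins the cleanup game outright; the paper makes exactly this observation when explaining why a naive two-stage split fails. The main phase has the same problem: the Mikala\v{c}ki--Stojakovi\'c fast-matching result holds only for $b<\delta n/\log n$ with some small fixed $\delta$, not at bias $(1-o(1))n/\log n$; whether a fast win persists at larger bias is precisely the open question this theorem answers, so citing it there is circular. You correctly sense the tension (``the two constraints on $m$ must be reconciled'') but the reconciliation you defer to is impossible; no choice of the exponent bookkeeping around $(\log n)^{5/4}$ rescues it.

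The paper escapes this trap with an idea that is absent from your proposal: the leftover vertices are not left as a bare clique. During the first stage, while building the near-perfect matching $M$, Maker also grows a family of $p=\Theta(n/\sqrt{\log n})$ \emph{augmenting trees}, each with $\ell=\lceil\sqrt{f(n)}(\log n)^{1/4}\rceil$ leaves of small Breaker-degree, plus some matchable trees absorbing high-Breaker-degree (``troublesome'') vertices; all of this extra structure has $o(n)$ vertices, so the stage still takes $\frac{n}{2}+o(n)$ turns. The second stage is then played not on a clique of size $o(n)$ but on an auxiliary multigraph $F'$ on the $p$ trees, with $\ell^2$ parallel edges between leaf-sets, whose minimum degree is $\omega(n)$; thus $b=o\bigl(\delta(F')/\log\delta(F')\bigr)$ and a Krivelevich-style random-out-edge plus P\'osa-rotation argument builds a Hamilton cycle of $F'$ in $O(p)=o(n)$ moves, whose alternate edges complete the perfect matching via the augmenting-tree structure. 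In short, the missing idea is to use the cheap first-stage moves to manufacture a dense auxiliary game for the endgame, rather than trying to find a sub-clique on which the known theorems already apply --- no such sub-clique exists at this bias.
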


We note that this improves the upper bound from \cite{Kri11} on the bias which ensures that Maker can win this game. 

Maker's strategy is a 2-stage approach. In the next section we discuss the goal of each stage and sketch the proof that she can successfully complete them.
In Section \ref{s:details} we fill in the details. 

We close this section with four definitions. At any point in the game, we use $d_B(v)$ to denote the degree of the vertex $v$ in the graph chosen by Breaker and $d_M(v)$ to denote its degree in the graph chosen by Maker. We let $E_B$ and $E_M$ be the edges picked by Breaker and Maker.

\section{A Proof Sketch}\label{s:sketch}

Since Maker can always pretend Breaker has chosen more edges than he actually has, we can and do assume that $f(n) =o(\log{\log{n}})$. As noted above, Maker's approach has two stages. 

At the end of the first stage, Maker has chosen a subgraph which contains a  matching  $M$ such that $|V(M)|=(1-o(1))\frac{n}{2}$. 
At this point  every vertex $v$ satisfies $d_M(v) \le  3$ and if $v$ is in $M$, $d_M(v)=1$.  Thus 
Maker chooses at most $\frac{n}{2} +o(n)$ edges in the first stage. In the second stage Maker chooses only edges within $V(G)-V(M)$. 
The edges chosen in the first stage ensure that  Maker can make these choices so that after $14|V(G)-V(M)|$  turns, her  graph will contain a matching with vertex set $V(G)-V(M)$ and hence she has won the game in $\frac{n}{2} +o(n)$ turns. 

Now, if Maker chose no edges within $V(G)-V(M)$ during the first stage, then the second stage simply consists of playing the perfect matching game  on a clique with $n'=o(n)$ vertices. But $b=\omega(\frac{n'}{\log{n'}})$, hence Breaker wins this game and Maker will not be able to successfully complete the second stage. Thus,
during the first stage, as well as choosing the edges of $M$, Maker must choose edges between the elements of $V(G)-V(M)$ so that the graph $F$ formed by these non-matching edges has a structure that allows her to carry out the second stage. 
To specify this structure precisely we need some definitions.

We say that a tree is {\it matchable} if it has a perfect matching. 

We say a rooted  tree $T$ with root $r$ is {\it augmenting} if every node at an odd depth has exactly one child. Thus every leaf is at even depth and there is a unique matching $N_T$
in T, with vertex set T-r containing, for each vertex at odd depth, the edge from this vertex to its unique child.  We note that for every leaf $x$ of $T$, letting $P_x$ be the path of $T$ from $x$ to $r$,  we have that $N_x=(N_T-E(P_x)) \cup (E(P_x)-N_T)$ is a matching of $T$  with vertex set $V(T)-x$.

For some even $p$, $F$ will have $p$ components that are not matchable trees and each of these will be augmenting.  This means that in the second stage Maker need only construct a matching $N$,  with $\frac{p}{2}$ edges 
containing exactly one leaf in each component that is an augmenting tree. Since $T-x$ has a matching for every component  $T$ of $F$ and leaf $x$ of $T$, we can extend $N \cup M$ to the desired perfect matching. 

We
need to impose some additional structure on the augmenting trees in order to ensure we can construct $N$. Specifically, %we will need
that each of these trees has a large number of leaves, and all of these leaves have low degree in Breaker's graph. We say a node v is {\it troublesome} if $d_B(v)>\frac{n}{\sqrt{\log{n}}}$. We let $\ell=\lceil  \sqrt{f(n)}(\log{n})^{1/4} \rceil $. 
We say that a tree  $T$ is {\it nice} if it is augmenting, has $\ell$ leaves; none of which is troublesome,  has maximum degree 3, and  all of its nontroublesome nonleaf nodes at even depth have two children.

Modifying  the approach of Krivelevich, we shall show:  

 \begin{lemma}
\label{stage2lemma}
Given that Maker has  chosen a graph each of whose components is a nice tree, matchable, or an edge and such that the number of components that are nice trees  is an even $p$ lying between $\frac{n}{\sqrt{\log{n}}}$ and $\frac{4n}{\sqrt{\log{n}}}+1$, she has a strategy that allows her to construct a matching with $p/2$ edges  whose vertex set contains exactly one leaf from each nice tree in $14p$  moves.

\end{lemma}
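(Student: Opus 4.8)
The plan is to imitate Krivelevich's pairing/potential argument for the biased connectivity or matching game, but restricted to the $p$ nice trees and keeping track of the extra turns Maker spends. The object Maker is trying to build is a matching $N$ with $p/2$ edges, each edge joining two distinct nice-tree components via one leaf of each; once $N$ is in place the remaining matching structure is free by the augmenting property. So the real game is: on an auxiliary set of "ports" — the leaves of the nice trees, of which there are exactly $p\ell$, organized into $p$ groups of size $\ell$ — Maker must claim a perfect matching on the $p$ groups (think of contracting each nice tree to a single super-vertex, so that Maker must claim a perfect matching on a clique $K_p$, except each super-edge between groups $i$ and $j$ can be realized by any of the $\ell^2$ leaf-pairs, and Breaker's moves block individual leaf-pairs). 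First I would make this reduction precise: define the contracted clique $K_p$ on super-vertices, and say a super-edge $ij$ is \emph{available} if Breaker has not blocked all $\ell^2$ leaf-pairs between groups $i$ and $j$, and Maker \emph{claims} $ij$ by selecting one unblocked leaf-pair. Breaker's $b$ edges per turn translate into at most $b$ blocked leaf-pairs per turn; since there are $\ell^2 = \Theta(f(n)\sqrt{\log n})$ pairs per super-edge and $b = O(n/\log n)$, a super-edge only becomes unavailable after Breaker invests a substantial number of turns in it, and this is the slack that makes Maker's job possible.

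The core of the argument is then a fast perfect-matching strategy on $K_p$ against a Breaker who, each turn, can delete up to $b$ "real" edges, equivalently up to $b/\ell^2$ super-edges (amortized). Since $b/\ell^2 = O\!\big(\tfrac{n/\log n}{f(n)\sqrt{\log n}}\big) = o\!\big(\tfrac{n}{(\log n)^{3/2}}\big) = o(p)$ — using $p \ge n/\sqrt{\log n}$ — Breaker effectively deletes $o(p)$ super-edges per turn, so on $K_p$ Maker is playing against a sub-linear bias and can be expected to win in $O(p)$ turns. I would run a greedy/matching-maintenance strategy: Maker maintains a partial matching $N$ together with a set of "exposed" super-vertices; each turn she either extends $N$ by an available super-edge between two exposed vertices, or, if Breaker has just isolated an exposed vertex within the exposed set (killed all its available super-edges to other exposed vertices), she performs one augmenting swap to re-route. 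A counting argument bounds the number of such swaps: Breaker must invest $\Omega(\ell^2 \cdot (\text{number of exposed vertices}))$ edge-selections to force one, and there are at most $p$ exposed vertices and $O(p)$ phases, giving a total of $O(p)$ Maker turns plus lower-order correction terms; the constant $14$ is then obtained by being somewhat generous in the bookkeeping (the factor should be comfortably below $14$, leaving room for the occasional re-scan and for the $+1$ in the range of $p$).

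The step I expect to be the main obstacle is the \emph{danger-vertex / threshold control}: showing that Breaker cannot concentrate his bias to kill off a small set of super-vertices faster than Maker can react, i.e., proving that whenever an exposed super-vertex is about to lose all its available super-edges to the other exposed vertices, Maker has had enough prior turns to have already matched it or to complete an augmenting path through it. This is exactly the place where Krivelevich's argument needs the most care, and where the precise value of the bias threshold $b < n/\log n - f(n)n/(\log n)^{5/4}$ enters — the $(\log n)^{5/4}$ and the choice $\ell = \lceil \sqrt{f(n)}(\log n)^{1/4}\rceil$ are tuned so that $\ell^2 \cdot (\text{number of still-dangerous vertices})$ always exceeds the bias by the right margin. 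I would handle it by a potential function that weights each exposed super-vertex by the number of its still-available super-edges within the exposed set, show Breaker decreases the potential by at most $b$ per turn while Maker's extensions/swaps keep it above a positive threshold as long as $\ge 2$ vertices remain exposed, and conclude that the exposed set is driven to size $0$ (it stays even throughout) within the claimed number of moves. The remaining details — that a matchable component or a single edge contributes nothing to the game and can be ignored, and that reconstructing $N_x$ inside each nice tree costs Maker nothing in stage 2 because those matching edges are already chosen — are routine and follow directly from the definition of augmenting/nice trees given above.
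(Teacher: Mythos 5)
Your opening reduction coincides with the paper's: contract each nice tree to a super-vertex, so that the game becomes one on an auxiliary (multi)graph on $p$ super-vertices in which the super-edge $ij$ has multiplicity $\ell^2$ (the unchosen leaf-pairs), matchable components and lone edges are irrelevant, and the augmenting property makes the in-tree completion free. From there, however, you diverge: the paper does not play a matching-maintenance game on $K_p$ at all; it runs Krivelevich's two-phase Hamiltonicity strategy on the auxiliary multigraph $F'$ (Phase 1: every super-vertex acquires out-degree $10$ by random choices, prioritized by a danger function $d_B(v)-2bd^+_M(v)$; Phase 2: P\'osa rotations, with an expansion lemma guaranteeing $\Omega(p^2)$ booster pairs, hence $\Omega(p^2\ell^2)$ actual booster edges, which dwarfs Breaker's total budget of roughly $11pb$). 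The Hamilton cycle then contains the desired matching since $p$ is even.

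The divergence matters, because the step you yourself flag as the main obstacle is a genuine gap and, as proposed, cannot be closed. Your potential function counts available super-edges from exposed super-vertices \emph{within the exposed set}, and you want it to stay positive while at least two vertices remain exposed. But $\ell^2=\Theta(f(n)\sqrt{\log n})$ while $b=\Theta(n/\log n)$, and $f(n)=o(\log\log n)$, so $b\gg \ell^2\cdot m$ once the number $m$ of exposed super-vertices is small: in the endgame Breaker can erase \emph{every} available super-edge among the exposed vertices in a single turn (contrary to your claim that blocking a super-edge costs Breaker ``a substantial number of turns''; it costs a vanishing fraction of one turn). Thus the swaps you rely on must route through already-matched super-vertices, and for that Maker needs to have claimed structure spanning the matched part that Breaker cannot have pre-emptively killed --- nothing in the greedy strategy provides it, and the bound ``Breaker must invest $\Omega(\ell^2\cdot|\text{exposed}|)$ to force a swap'' likewise collapses at the end. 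Supplying exactly this reserve of re-routing options is what the paper's construction does (a constant-out-degree random expander on all $p$ super-vertices, built against a max-danger Breaker, followed by a booster-counting argument showing Breaker's $\approx 11pb$ edges cannot block the $\Omega(p^2\ell^2)$ ways to extend or close the path). Without an analogous mechanism --- some expander-like or P\'osa-type structure over the matched vertices, together with a quantitative count beating Breaker's total budget rather than his per-turn budget --- your argument does not yield the lemma.
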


So, to complete the proof we need only show that Maker has a strategy that ensures she can construct a graph  as in the hypothesis of Lemma \ref{stage2lemma} in $\frac{n}{2}+o(n)$ turns.  

 If no vertex ever became troublesome, then as long as there are  at least $\frac{n}{\sqrt{\log{n}}}+2$ unmatched vertices we could simply pair two unmatched vertices to increase the size of 
 the matching. Then the remaining unmatched vertices could be  the roots of the nice trees and we could build these trees, each with $\ell$ leaves, by repeatedly choosing two edges (in two consecutive turns) 
  from a leaf of such a tree to two vertices in different edges  of the matching,  and also add the two matching edges these vertices are in to the tree.  This increases the number of leaves of the tree by one (see Figure \ref{fig:strategy}). Since there are no troublesome vertices, there will always be plenty of choices 
 for the two matching edges. We note that each tree would have exactly $4\ell-3$ vertices so the total size of our trees would be $O(p\ell)=o(n)$. 
 \begin{figure}[ht]
    \centering
    \begin{tikzpicture}[scale=0.5]
        % Scope for vertices
        \begin{scope}[every node/.style={circle, fill=black, draw, inner sep=0pt,
        minimum size = 0.15cm
        }]
        % \node[label={[label distance=4]245:$v_3$}] (v3) at (4.5,0) {};
        % \draw[very thick,dotted] (5.7,0) circle (1.2cm);
            
            \node[] (t01) at (2,10) {};
            \node[] (t02) at (1,9) {};
            \node[] (t03) at (3,9) {};
            \node[] (t04) at (1,8) {};
            \node[] (t05) at (3,8) {};
            \node[] (t06) at (0,7) {};
            \node[] (t07) at (2,7) {};
            \node[] (t08) at (0,6) {};
            \node[] (t09) at (2,6) {};
            
            \node[] (t11) at (3.5,5) {};
            \node[] (t12) at (2.5,4) {};
            \node[] (t13) at (4.5,4) {};
            \node[] (t14) at (2.5,3) {};
            \node[] (t15) at (4.5,3) {};
            
            \node[] (t21) at (7,9) {};
            \node[] (t22) at (6,8) {};
            \node[] (t23) at (8,8) {};
            \node[] (t24) at (6,7) {};
            \node[label={[label distance=4]245:$v$}] (t25) at (8,7) {};
            
            \node[] (m01) at (12,10) {};
            \node[] (m02) at (14,10) {};
            \node[] (m11) at (12,9) {};
            \node[] (m12) at (14,9) {};
            \node[label={[label distance=2]165:$u$}] (m21) at (12,8) {};
            \node[label={[label distance=2]15:$w$}] (m22) at (14,8) {};
            \node[] (m31) at (12,7) {};
            \node[] (m32) at (14,7) {};
            \node[] (m41) at (12,6) {};
            \node[] (m42) at (14,6) {};
            \node[] (m51) at (12,5) {};
            \node[] (m52) at (14,5) {};
            \node[] (m61) at (12,4) {};
            \node[] (m62) at (14,4) {};
            \node[label={[label distance=2]195:$y$}] (m71) at (12,3) {};
            \node[label={[label distance=2]345:$z$}] (m72) at (14,3) {};
            \node[] (m81) at (12,2) {};
            \node[] (m82) at (14,2) {};
        
        \end{scope}

        % Scope for edges
        \begin{scope}[every edge/.style={draw=black}]
        %\path[->, very thick, dotted] (u) edge[min distance=80, bend left=120] node[label={[label distance=-5]90:$P_2$}] {} (v);
            
            %\path[thick,dashed,-{Latex[length=2mm,width=2mm]}] (u1) edge[draw=red] node[label={[label distance=-7]-15:$-0.3$}] {} (u2);
            \path[thick] (t01) edge node {} (t02);
            \path[thick] (t01) edge node {} (t03);
            \path[ultra thick, dotted] (t02) edge[draw=blue] node {} (t04);
            \path[ultra thick, dotted] (t03) edge[draw=blue] node {} (t05);
            \path[thick] (t04) edge node {} (t06);
            \path[thick] (t04) edge node {} (t07);
            \path[ultra thick, dotted] (t06) edge[draw=blue] node {} (t08);
            \path[ultra thick, dotted] (t07) edge[draw=blue] node {} (t09);
            
            \path[thick] (t11) edge node {} (t12);
            \path[thick] (t11) edge node {} (t13);
            \path[ultra thick, dotted] (t12) edge[draw=blue] node {} (t14);
            \path[ultra thick, dotted] (t13) edge[draw=blue] node {} (t15);
            
            \path[thick] (t21) edge node {} (t22);
            \path[thick] (t21) edge node {} (t23);
            \path[ultra thick, dotted] (t22) edge[draw=blue] node {} (t24);
            \path[ultra thick, dotted] (t23) edge[draw=blue] node {} (t25);
            
            \path[ultra thick, dotted] (m01) edge[draw=blue] node {} (m02);
            \path[ultra thick, dotted] (m11) edge[draw=blue] node {} (m12);
            \path[ultra thick, dotted] (m21) edge[draw=blue] node {} (m22);
            \path[ultra thick, dotted] (m31) edge[draw=blue] node {} (m32);
            \path[ultra thick, dotted] (m41) edge[draw=blue] node {} (m42);
            \path[ultra thick, dotted] (m51) edge[draw=blue] node {} (m52);
            \path[ultra thick, dotted] (m61) edge[draw=blue] node {} (m62);
            \path[ultra thick, dotted] (m71) edge[draw=blue] node {} (m72);
            \path[ultra thick, dotted] (m81) edge[draw=blue] node {} (m82);
            
            \path[very thick, dashed] (t25) edge node {} (m21);
            \path[very thick, dashed] (t25) edge node {} (m71);
            
        \end{scope}

        % Scope for labels
        \begin{scope}[every node/.style={draw=none,rectangle}]
        %\node (Blabel) at (-0.65,0) {$B$};
            
            \node (Tlabel) at (7,7.5) {$T$};
            \node (Mlabel) at (15.5,6) {$M$};
            
        \end{scope}
    \end{tikzpicture}
    \caption{If no vertex becomes troublesome, Maker's strategy first selects a matching $M$ and then builds a collection of trees by choosing edges from a leaf to two matching edges. In the figure, matching edges are represented by dotted lines. Maker chooses edges from the leaf $v$ of tree $T$ to the edges $uw$ and $yz$. Thus leaf $v$ of $T$ is replaced by leaves $w$ and $z$.}
    \label{fig:strategy}
\end{figure}
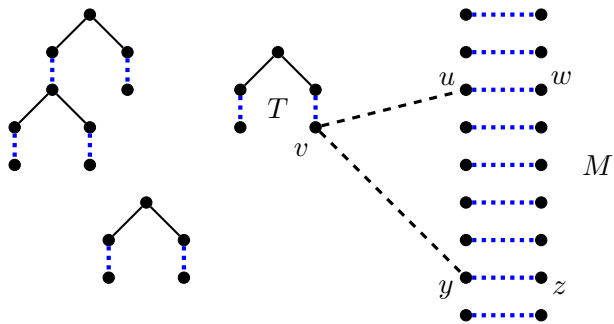
 
 Troublesome vertices complicate our choices in two ways. There are fewer choices for the edges out of them, and we want to deal with the troublesome vertices before Breaker picks all the 
 edges incident to them. This may mean we cannot pick an edge we otherwise would have picked because we have to choose an edge with a troublesome endpoint instead.
 Thus, we may have to root a tree of size exceeding two at a troublesome vertex even though there are many more than  $\frac{n}{\sqrt{\log{n}}}$ unmatched vertices because Breaker has picked 
 all the edges between the troublesome vertex and the unmatched vertices. 
 This will increase the number of trees of size exceeding two  we create.
 Furthermore, Breaker may have chosen all the edges 
 from a troublesome leaf of one of the trees we want to make nice to vertices in our matching. This will force us to pick an edge 
 from this leaf to a singleton tree to construct a matchable tree with four or more vertices. 
 This explains our need for both matchable and nice trees of size exceeding two.  
 
 The reason we can handle the trouble caused by troublesome vertices is that there are not very many of them. We carry out fewer than $n$ turns and Breaker chooses fewer than $\frac{n}{\log{n}}$ edges 
 in each turn. Hence, the total degree in Breaker's graph is at most $\frac{2n^2}{\log{n}}$ and there are at most $\frac{2n}{\sqrt{\log{n}}}$ troublesome vertices. We exploit this fact repeatedly. 

For any set S we let $\tau(S)$ be the number of troublesome
vertices %nodes
in $S$.  We say a tree $T$  is {\it small} if $|V(T)|<4(\tau(V(T))+\ell)$. 

We shall show: 
\begin{lemma}
\label{stage1lemma}
Maker has a strategy to choose, in an initial set of turns, a graph F  which,  for some even $p$ with  $\frac{n}{\sqrt{\log{n}}} \le p \le \frac{4n}{\sqrt{\log{n}}}+1$, has $p$ components that are  small nice trees and whose remaining components are small matchable trees each of which either contains a troublesome vertex or is an edge.
\end{lemma}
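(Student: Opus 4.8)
The plan is for Maker to run a two‑phase strategy with a ``troublesome handler'' interrupt sitting on top of it. In the first phase Maker repeatedly picks an available edge between two non‑troublesome unmatched vertices. Since Breaker makes fewer than $n$ moves of fewer than $\frac{n}{\log n}$ edges each, at every moment at most $\frac{2n}{\sqrt{\log n}}$ vertices are troublesome and every non‑troublesome vertex has Breaker‑degree at most $\frac{n}{\sqrt{\log n}}$; a one‑line count of the edges among $k$ non‑troublesome unmatched vertices then shows the wanted edge exists whenever $k>\frac{n}{\sqrt{\log n}}+1$, so Maker can keep pairing until only $\Theta\!\left(\frac{n}{\sqrt{\log n}}\right)$ unmatched vertices remain, at which point she stops. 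She designates $R:=\lceil\frac{n}{\sqrt{\log n}}\rceil$ of these as roots of trees‑to‑be and keeps the remaining vertices as a reserve pool whose size is a suitably large multiple of $\frac{n}{\sqrt{\log n}}$. In the second phase she grows each root into a nice tree by iterating the leaf‑expansion move of Figure~\ref{fig:strategy}: choose a non‑troublesome leaf of a tree‑in‑progress, and over two turns choose two available edges from it to endpoints of two distinct matching edges no endpoint of which is troublesome, absorbing those matching edges. This move is again available by counting: the leaf has Breaker‑degree at most $\frac{n}{\sqrt{\log n}}$, while only $\frac{2n}{\sqrt{\log n}}$ matching edges are troublesome out of the $(1-o(1))\frac n2$ present. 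After $\ell-1$ iterations the tree is nice, with $4\ell-3$ vertices and maximum degree $3$.

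The handler fires the instant a vertex $v$ becomes troublesome, before Breaker can destroy Maker's options at $v$. If $v$ is an unmatched pool vertex, Maker pairs it with another pool vertex (available by the same count) or, should the pool have shrunk, merges it and another pool vertex into a four‑vertex matchable path through an existing matching edge; either way the new component contains the troublesome vertex $v$, so it is of the allowed form. If $v$ is a leaf $x$ of a tree‑in‑progress $T$, Maker caps it with an available edge $xy$ to a fresh reserve vertex $y$: because she acts within one turn of $x$ becoming troublesome, $d_B(x)<\frac{n}{\sqrt{\log n}}+\frac{n}{\log n}$, which is below the pool size she maintains, so such a $y$ exists, and $T+xy$ is a matchable tree containing the troublesome vertex $x$ (its perfect matching is $N_x\cup\{xy\}$). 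Immediately after capping, Maker promotes another reserve vertex to a new root, so the stock of intended nice trees is replenished. Since each troublesome vertex triggers at most one such repair, the total number of repairs is $O\!\left(\frac{n}{\sqrt{\log n}}\right)$, so a pool whose initial size is a large enough multiple of $\frac{n}{\sqrt{\log n}}$ never runs dry. A troublesome vertex that sits in a finished matchable tree, or at a non‑leaf node of a nice tree, needs no attention, since ``nice'' only forbids troublesome \emph{leaves}.

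When every root has matured into a nice tree or been capped, Maker drains the reserve pool: she pairs its vertices two at a time while it exceeds $\frac{n}{\sqrt{\log n}}+1$, and turns each of the at most $\frac{n}{\sqrt{\log n}}+1$ survivors into one further nice tree. The number $p$ of nice‑tree components is then the number of maintained roots plus these last trees, so $\frac{n}{\sqrt{\log n}}\le p\le\frac{4n}{\sqrt{\log n}}+1$; and since $n$ is even, every matchable component has even order and every nice tree has the odd order $4\ell-3$, so $p\equiv n\equiv0\pmod 2$ automatically, with no isolated vertices left. Smallness is immediate: a nice tree has $4\ell-3<4(\tau(V(T))+\ell)$ vertices, and every other tree is an edge or a capped tree on at most $4\ell-2$ vertices containing a troublesome vertex, whence $\tau(V(T))\ge1$ and $4\ell-2<4(\tau(V(T))+\ell)$. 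Everything outside the matching touches only $O(p\ell)=o(n)$ vertices, and every vertex ends with Maker‑degree at most $3$, exactly $1$ for the $(1-o(1))n$ matched vertices, so the first stage costs $\frac n2+o(n)$ turns. The real work — and where the bound on troublesome vertices is used over and over — is the simultaneous bookkeeping: the reserve must be large enough to absorb every cap and pool‑repair, the replenishment rule must keep $p$ from drifting above $\frac{4n}{\sqrt{\log n}}+1$, and for each of the few move types one must verify that Breaker's degree budget still leaves Maker a legal edge; each check is routine on its own, but they constrain one another.
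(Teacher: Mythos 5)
The overall architecture you describe (build a matching, grow nice trees by the two-edge leaf expansion, divert troublesome leaves into matchable trees, count troublesome vertices by the $\frac{2n}{\sqrt{\log n}}$ bound) matches the paper, but there is a genuine gap at the crux: your claim that the ``handler fires the instant a vertex becomes troublesome,'' so that at repair time $d_B(x)<\frac{n}{\sqrt{\log n}}+\frac{n}{\log n}$, is false. Breaker can push up to $2b\approx\frac{2n}{\log n}$ vertices over the threshold in a \emph{single} turn, and over the game roughly $\frac{2n}{\sqrt{\log n}}$ leaves can become troublesome, while Maker can service only one per turn. So a backlog of troublesome leaves forms, and while Maker clears it Breaker keeps adding $b$ edges per turn to the unserviced ones; naively the last vertex in the queue could gain on the order of $\frac{n}{\sqrt{\log n}}\cdot b\gg n$ Breaker edges before Maker reaches it, i.e.\ it could be completely cut off from the reserve pool and the matching. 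Your counting for the cap move (and for the pool repair) relies entirely on the unjustified ``within one turn'' bound, so the step ``such a $y$ exists'' does not follow.

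This is precisely the difficulty the paper's proof is built around: Maker always services the troublesome leaf of \emph{maximum} Breaker degree, and a potential-function (amortized) argument over the maximal run of consecutive troublesome-leaf steps shows that at the moment a vertex is serviced its Breaker degree is at most roughly $O\!\left(\frac{n}{\sqrt{\log n}}\right)+b\log n$, which is below $n-\frac{16\ell n}{\sqrt{\log n}}-4\ell-\frac{20n}{\sqrt{\log n}}$ only because $b<\frac{n}{\log n}-\frac{f(n)n}{(\log n)^{5/4}}$ and $\ell=\lceil\sqrt{f(n)}(\log n)^{1/4}\rceil$. Tellingly, your argument never uses the specific bias bound or the specific value of $\ell$; with your claimed timing guarantee the theorem would follow for substantially larger bias, which signals that the hard case has been assumed away. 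To repair the proposal you would need to add the max-degree service rule and an averaging/potential argument (or an equivalent mechanism) bounding the Breaker degree of a troublesome leaf at the time it is handled; the surrounding bookkeeping you describe (pool sizes, parity of $p$, smallness, the $\frac{n}{2}+o(n)$ turn count) is fine and essentially the same as the paper's.
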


We note that $F$ has at most $p$ components that are not an edge and contain no troublesome vertex.
So, $F$ has at most $\frac{2n}{\sqrt{\log{n}}}+p \le  \frac{6n}{\sqrt{\log{n}}}+1$ components  that are not edges.  Each such component $T$ is small, i.e. $|V(T)| \le 4(\tau(V(T)+\ell)$. Since there are at most $\frac{2n}{\sqrt{\log{n}}}$ troublesome vertices, the total number of vertices in components of $F$ that are not edges is at most 
$\frac{24n\ell}{\sqrt{\log{n}}}+4\ell+\frac{8n}{\sqrt{\log{n}}}$.

Since this is $o(n)$, Maker constructs $F$ in $\frac{n}{2}+o(n)$ turns and so combining
Lemmas \ref{stage2lemma} and \ref{stage1lemma} proves Theorem \ref{themaintheorem}. 

This completes our intuitive sketch of the approach. It  remains to  set out the precise strategy used in each stage 
 and prove that both  can be successfully applied.  We do so in the next section.
 
\section{The Details} \label{s:details}

\subsection{Stage 1 algorithm formalization}

  To begin we set out the invariants that Maker will ensure hold throughout Stage 1 and her strategy for doing so.
 We remark that Maker may decide during %on
 one turn what to do in both that turn and the next. Because of this, each step of the algorithm will take either 1 turn or 2 turns. 

After $i$ turns in Stage 1,  Maker will ensure that the graph she  has chosen consists of a matching $M_i$, a set $q_i$ 
of small matchable trees, each with a troublesome root, 
and for some $p_i > \frac{n}{\sqrt{\log{n}}}$,  a family ${\cal T}_i=\{T_1,...,T_{p_i}\}$ of augmenting trees, each of maximum degree three. Furthermore, 
She will attempt to also ensure that every non-leaf non-troublesome node at even depth of one of the augmenting trees has degree 2.
However, when adding children underneath a nontroublesome node she will add two, in two consecutive turns,  and between these turns this will not be true.

We have ${\cal T}_0=\set{T_1,...,T_n}$,$q_i=0$, $p_i=n$ where each $T_i$ is a singleton, and  $E_M=E_B=\emptyset$.

\begin{enumerate}
    \item \textbf{If no tree in ${\cal T}_i$ has a leaf that is troublesome:}
    \begin{enumerate}
        \item \textbf{If $p_i>2\ceil{\frac{n}{\sqrt{\log{n}}}}+1$ and there are $u,v$ such that $\deg_M(u)=\deg_M(v)=0$ and $(u,v)\notin E_B$}
        
        Maker chooses $e=(u,v)$.  She has removed 2 trees and added an edge to the matching, so $p_{i+1}=p_i-2$,  ${\cal T}_{i+1}={\cal T}\setminus \set{\set{u},\set{v}}$ and $M_{i+1}=M_i\cup \set{e}$.
        
        \item \textbf{Otherwise if there is $T_j\in {\cal T}_i$ with fewer than  $\ell$ leaves}
        
        Maker picks a leaf $v$ of $T_j$, and finds   an edge $(w,z)\in M_i$ such that neither $w$ nor $z$ is troublesome and $(v,w)\notin E_B$. Maker chooses $(v,w)$ and adds this edge and $e$ to $T_j$. 
        
        Then, after Breaker's turn, Maker finds  another edge $(x,y)\in M_i$ such that neither $x$ nor $y$ is troublesome and $(v,x)\notin E_B$, chooses $(v,x)$ and adds this edge and $(x,y)$  to $T_j$. 
        
        The number of trees in ${\cal T}_i$  hasn't changed in either step so $p_{i+2}=p_{i+1}=p_i$ however,$M_{i+1}=M_i\setminus\set{w,z}$ and $M_{i+2}=M_{i+1}\setminus\set{(x,y)}$.

        \item \textbf{Otherwise}
        
        We end the algorithm.
    \end{enumerate}
    \item \textbf{Otherwise, consider  a tree $T_j \in {\cal T}_i$ and   troublesome leaf $v$  of  $T_j$  where the pair $(j,v)$ is chosen so as to maximize  $d_B(v)$}
    \begin{enumerate}
        \item[(a)] 
            If possible, Maker chooses an edge $vw$ such that $w$ is contained in an edge  $(w,z)$ of $M_i$ such that neither $w$ nor $z$ is troublesome and $(v,w)\notin E_B$. She adds $vw$ and $wz$ to  $T_j$. Set  $M_{i+1}=M_i\setminus{(w,z)}$ and  $p_{i+1}=p_i$.
        \item[(b)] Otherwise, Maker chooses an edge $vw$ such that $d_M(w)=0$, $w$ is not troublesome, and $vw \not\in E_B$ and  adds it to $T_j$. 
           She then adds $T_j$ to the set of matchable trees containing a troublesome vertex  and deletes $T_j$ and $z$  from ${\cal T}_i$. So $p_{i+2}=p_i-2$,  ${\cal T}_i ={\cal T}\backslash \{T_j,w\}$, $M_{i+2}=M_i$, $q_{i+2}=q_i+1$.
    \end{enumerate}

\end{enumerate}
We need to show that  this strategy  shows that  Lemma \ref{stage1lemma} holds. 

\subsection{Proof of Lemma \ref{stage1lemma}}

We will  show that Maker can  always choose an edge with the desired properties; hence, that
the first stage ends. We  then demonstrate  that when  the stage  terminates,  the  properties
required by Lemma \ref{stage1lemma} hold. Before doing either we  make some useful observations 
about properties that hold throughout the process. 

We note first that  as discussed above, we  have fewer than $\frac{2n}{\sqrt{\log{n}}}$ troublesome vertices.

Each non-singleton tree of any ${\cal T}_i$ is composed of some edges that were in the matching 
at some point and others that were not. The first edge chosen from the root of such a tree 
was never in the matching and was chosen in Case 2 a) or Case 1 b). If this edge was chosen in Case 2 a) then the 
root is troublesome so there are never more than $\frac{2n}{\sqrt{\log{n}}}$ such trees.
If this edge was chosen in 1 b), then in the iteration it was chosen Maker did not carry out 1 a). 
This means that there were at most  $  \lceil \frac{2n}{\sqrt{\log{n}}} \rceil$ singleton trees at this point
as otherwise for every singleton tree  consisting of a (nontroublesome) vertex $u$ there would be at least  $\lceil \frac{n}{\sqrt{\log{n}}} \rceil$ 
choices  for a $v$ such that we could apply 1 a) for $u$ and $v$.

%So,
In every iteration, the set of  non-troublesome roots of the nonsingleton trees that are not in the matching, are contained in the set of the last 
$\lceil \frac{2n}{\sqrt{\log{n}}} \rceil$ vertices to be in singleton trees. 

Hence, we always have that the total number of  trees of size at least three in Maker's graph 
is at most $\frac{4n}{\sqrt{\log{n}}}+1$. 

The size of a tree of ${\cal T}_i$  can be increase in Case 1 b) or Case 2. Each iteration of 1 b) increases the number of leaves 
in the tree by 1, thus, we may only have $\ell-1$ such turns per tree. Now, 
in Case 1 b)  4 vertices  are added to a tree; while in Case 2, at most two vertices are added.
Furthermore, every time we carry out Case 2 a), we add a troublesome internal vertex to the tree, and we carry out Case 2 b) for each tree at most once. We have that at every point in the algorithm each of the nonsingleton trees not in the matching is small. 

 Combining this with the last paragraph, and the fact that there are at most $\frac{2n}{\sqrt{\log{n}}}$ troublesome 
 vertices,  we have that  the union of these trees has size at most
$\frac{16\ell n}{\sqrt{\log{n}}}+4l+\frac{8n}{\sqrt{\log{n}}}$.

\begin{lemma}
Maker is always able to choose an edge as the strategy 1 algorithm specifies she should. 
\end{lemma}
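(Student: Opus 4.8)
The plan is to verify, case by case, that in every iteration of the Stage 1 algorithm the edge Maker is instructed to pick actually exists, using only the two counting facts already established: there are fewer than $\frac{2n}{\sqrt{\log n}}$ troublesome vertices at all times, and (the consequence of the "small tree" bookkeeping) the union of all nonsingleton trees not in the matching has size at most $\frac{16\ell n}{\sqrt{\log n}}+4\ell+\frac{8n}{\sqrt{\log n}}$, which is $o(n)$. I would also record that $|E_B| < \frac{n^2}{\log n}$ throughout, since fewer than $n$ turns are played and Breaker takes fewer than $\frac{n}{\log n}$ edges per turn, so $d_B(v) < \frac{n^2}{\log n} \cdot \frac{2}{n}$ on average and in particular every \emph{non}-troublesome vertex $v$ has $d_B(v) \le \frac{n}{\sqrt{\log n}}$ by definition.

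Next I would dispatch the easy cases. In Case 1(a) the edge is chosen precisely when the required $u,v$ exist, so there is nothing to prove. The substantive cases are 1(b), 2(a), and 2(b), which all have the same shape: Maker needs a matching edge $(w,z)$ (or in 2(b) a singleton $w$) with $w,z$ non-troublesome and the connecting edge $(v,w)$ not already in $E_B$, where $v$ is a fixed leaf of a current tree. For 1(b) and 2(a): the number of matching edges is $\Theta(n)$ (since $M_i$ has $\frac{n}{2} - o(n)$ vertices once we are past the trivial phase — this needs a short argument that most of $V(G)$ is in $M_i$, which follows because all nonsingleton trees not in the matching have $o(n)$ vertices, so when $p_i \le 2\lceil n/\sqrt{\log n}\rceil + 1$ the matching covers all but $o(n)$ vertices); at most $\frac{2n}{\sqrt{\log n}}$ of these edges meet a troublesome vertex; and $v$ (being non-troublesome, as Maker never needs to pick an edge out of a troublesome leaf in 1(b), while in 2(a) $v$ \emph{is} troublesome but then we instead bound the number of \emph{forbidden} partners by $d_B(v)$ — wait, here is the asymmetry) has $d_B(v)$ neighbours blocked by Breaker. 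When $v$ is non-troublesome, $d_B(v) \le \frac{n}{\sqrt{\log n}}$, so the number of matching edges $(w,z)$ that are unusable — either touching a troublesome vertex, or having $(v,w)\in E_B$ or $(v,z)\in E_B$ — is $O\!\big(\frac{n}{\sqrt{\log n}}\big) = o(n)$, leaving $\Theta(n)$ good choices. When $v$ \emph{is} troublesome (Case 2(a)), $d_B(v)$ can be as large as $n$, so this crude bound fails; here I would instead use that Case 2 picks the troublesome leaf $v$ maximizing $d_B(v)$, together with the fact that the \emph{total} Breaker-degree over all troublesome vertices is at most $\frac{2n^2}{\log n}$, to argue that for the currently chosen $v$ we still have $d_B(v) \le n - \omega(\ell \cdot n/\sqrt{\log n})$, i.e. the precise gap $b < \frac{n}{\log n} - \frac{f(n)n}{(\log n)^{5/4}}$ in the hypothesis is exactly what guarantees Breaker has not yet filled in all edges from $v$ to the $\Theta(n)$-sized pool of non-troublesome matched (or singleton) vertices before Maker processes $v$. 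This is the crux and deserves its own displayed inequality.

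For Case 2(b), Maker needs a non-troublesome singleton $w$ with $(v,w)\notin E_B$; this is reached only when 2(a) failed, i.e. every non-troublesome matching edge has both its connecting edges to $v$ in $E_B$ — but at that moment $d_B(v)$ is large, and I would show that since only $o(n)$ vertices lie in nonsingleton non-matching trees and at most $\frac{2n}{\sqrt{\log n}}$ are troublesome, if there were more than (say) $\frac{2n}{\sqrt{\log n}}$ singletons then Case 1(a) would have applied instead of Case 1(b) earlier; combined with the degree bound on $v$ coming from the hypothesis, a non-troublesome singleton $w$ outside $E_B(v)$ still exists — again the same budget inequality. I expect the main obstacle to be precisely this: tracking, for a troublesome leaf $v$, how large $d_B(v)$ can have grown by the time Maker gets to it, and confirming that the slack $\frac{f(n)n}{(\log n)^{5/4}}$ in the bias — which after multiplying by the at most $\sqrt{\log n} \cdot \ell = O(f(n)\log n \cdot \text{stuff})$ turns... let me not grind the constants — is comfortably enough to keep a linear-sized set of legal partners available. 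The monotone choice rule ("maximize $d_B(v)$") in Case 2 is what makes this bookkeeping tractable, since it ensures troublesome leaves are eliminated in decreasing order of danger, so no troublesome leaf is ever left to accumulate Breaker-edges while a more urgent one waits.
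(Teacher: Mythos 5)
Your skeleton matches the paper's: Case 1(a) is vacuous, Case 1(b) is a routine count using that the leaf $v$ is non-troublesome (so $d_B(v)\le \frac{n}{\sqrt{\log n}}$), few vertices are troublesome, and the non-matching trees cover only $o(n)$ vertices; and you correctly isolate the crux, namely bounding $d_B(v)$ for the \emph{troublesome} leaf $v$ processed in Case 2. But that crux is exactly what you do not prove, and the tools you point to cannot prove it. The bound ``total Breaker degree $\le \frac{2n^2}{\log n}$'' is far too weak: it is compatible with $\Theta(\frac{n}{\log n})$ vertices each having Breaker-degree close to $n$, so it says nothing about whether the particular $v$ Maker is about to process has already been cut off from the $\Theta(n)$ pool of good partners. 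Likewise, the qualitative remark that the ``maximize $d_B$'' rule means no troublesome leaf waits behind a less urgent one is not an argument; even a crude counting of the form ``to isolate $v$ Breaker needs $\approx n/b\approx\log n$ turns aimed at $v$, costing $\approx n\log n$ edges'' gives no contradiction, since Breaker's global budget is $\approx \frac{n^2}{\log n}\gg n\log n$. The statement you defer (``deserves its own displayed inequality'', ``let me not grind the constants'') is precisely the content of the lemma.

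What the paper does at this point is a potential-function (averaging) argument over the maximal run of consecutive Case 2 steps: letting $a_j,\dots,a_k$ be the vertices processed in that run and tracking, for the pending set $\{a_j,\dots,a_k\}$, a corrected average degree, one shows each step raises this average by at most $\frac{b}{k-j}+2$ (each Breaker turn contributes at most $b$ edge-endpoints into the pending set, plus at most $k-j$ edges to the vertex just removed), while the max-degree selection rule pins the processed vertex's degree to the current average up to an additive $k-j$. Summing the harmonic series gives $d_B(v)\le \frac{n}{\sqrt{\log n}}+\frac{2n}{\log n}+b\log n+O\!\left(\frac{n}{\sqrt{\log n}}\right)$, and only now does the hypothesis enter: $b\log n\le n-\frac{f(n)n}{(\log n)^{1/4}}$, and since $\ell=\lceil\sqrt{f(n)}(\log n)^{1/4}\rceil$ the slack $\frac{f(n)n}{(\log n)^{1/4}}$ dominates the $\frac{16\ell n}{\sqrt{\log n}}+4\ell+\frac{20n}{\sqrt{\log n}}$ that must remain available, which is what makes both Case 2(a)/2(b) and (with the $3b+2$ margin for the two-turn step, which you also gloss over) Case 1(b) feasible. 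Your Case 2(b) discussion is additionally garbled: the point is not that many singletons would have triggered 1(a), but that among the $\ge\frac{8n}{\sqrt{\log n}}$ non-Breaker-neighbours of $v$ lying in singletons or matching edges, failure of 2(a) forces all matched ones to lie in troublesome matching edges, of which there are few, so many non-troublesome singletons survive. As it stands, the proposal identifies the right difficulty but leaves the quantitative heart of the proof missing.
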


\begin{proof}
$~$

   Case 1:  
   
    At most $2b$ vertices can become troublesome in any turn. We need only show if we cannot carry out 1a) but there is some tree $T_j$ of ${\cal T}_i$ with fewer than $2\ell$ leaves, then for any leaf $v$ of $T_j$ there are at least $3b+2$ edges of $M_i$ that contain no troublesome vertices and Breaker has not isolated from $v$.  This implies the two turn process is possible, as Breaker  can only choose $b$ edges from $v$ and make $2b$ vertices troublesome in her intervening turn. 
    
    By our bound on the size of the non-singleton trees and number of troublesome vertices, there are at least $n 
    -\frac{16\ell n} {\sqrt{\log{n}}}-4\ell-\frac{12n} {\sqrt{\log{n}}}$ vertices that are either singletons or in  matching edges not containing a troublesome vertex. As shown above, since we did not carry out 1a), there are at most $\lceil \frac{2n}{\sqrt{\log{n}}}\rceil $ singletons. Since $v$ is not troublesome, there are most $\frac{n}{\sqrt{\log{n}}}~ w$ such that  $vw  \in E_B$. So there are  indeed the desired  $3b+2$ matching edges.

   Case 2:
   
       We first show that if for every $v$ from which we choose an edge in the first turn of a Case 2 step, we have $d_B(v)< n-\frac{16\ell n}{\sqrt{\log{n}}}-4\ell-\frac{20n}{\sqrt{\log{n}}}$, then Maker will always be able to find an appropriate edge in each turn of a  Case 2
       step. We then prove that we can ensure this upper bound on $d_B(v)$ holds. 
       
       The first step is similar to  the analysis for Case 1. We know that  there are at least $\frac{16\ell n}{\sqrt{\log{n}}}+4l+\frac{20n}{\sqrt{\log{n}}}$
       vertices of $G$ that are not joined to $v$ by an edge of Breaker's graph. At least $\frac{12n}{\sqrt{\log{n}}}$ of these are singletons or in a matching edge.  Since  there are at most $\frac{2n}{\sqrt{\log{n}}}$ troublesome vertices, it follows that we can carry out one of Step 2 a) or Step 2 b).   
 
       It remains to show that whenever Maker attempts to choose an edge from a vertex $v$ in the first turn of a Caser 2 step, we have $d_B(v)< n -    \frac{16\ell n}{\sqrt{\log{n}}}-4l-\frac{20n}{\sqrt{\log{n}}}$. We assume for a contradiction this is not the case and consider 
       the sequence of her (one or two turn) steps  culminating with the first  turn in which this bound does not hold.

       We look at the suffix of this sequence starting immediately after the last step where  a choice was made via an application of  Case 1. We let $k$ be the number of  steps  in this suffix. Since, we are in Case 2 for each of these $k$ steps, we have $k \le \frac{2n}{\sqrt{\log{n}}}$ steps. Furthermore, each of these steps consists of only one turn.  We let $a_i$ be the vertex that Maker makes into a non-leaf  in the  $i^{th}$ of these $k$ turns/steps and note that the $a_i$ are distinct. For each $i$, we let 
        $d^j_i = d_B(a_i) - |\{a_l| l \ge j,a_ia_l \in E_B\}|$, after $j$ steps of the sequence.
        
        Now, Maker carried out a step before this sequence of $k$ steps, as at the start of the game there are no troublesome vertices. Before this step there were no troublesome vertices and there were at most two turns in the step, each of   which  can increase a Breaker degree by at most 2b. So we end up with $ d^0_i \le  \frac{n}{\sqrt{\log n}}+\frac{2n}{\log n}$. 

        We define the potential function $p(j)$, as follows:

\begin{align*}
p(j) := \frac{1}{k-j+1}\sum_{i=j}^{k}d^{j-1}_i.
\end{align*}

Note that, we are assuming that $p(k) = d^{k-1}_k \ge n -\frac{16\ell n}{\sqrt{\log{n}}}-4l-\frac{20n}{\sqrt{\log{n}}}$. On the other hand
\begin{align*}
p(1)  = \frac{1}{k}\sum_{i=1}^{k}d^0_i \le \frac{1}{k} \times k \times (\frac{n}{\sqrt{\log n}}+\frac{2n}{\log n})=\frac{n}{\sqrt{\log n}}+\frac{2n}{\log n}.
\end{align*}

We derive a contradiction by bounding the increase in the potential function in each step. 
By definition $\sum_{i=j+1}^{k}d^j_i-\sum_{i=j+1}^{k}d^{j-1}_i$ is the sum of the number of 
edges Breaker chose in the $j^{th}$ turn with exactly one end in $\{a_{j+1},...,a_k\}$ 
and the number of edges he has chosen between $a_j$ and   $\{a_{j+1},...,a_k\}$  by the end of the 
$j^{th}$ turn. The first of these is at most $b$ and the second is at most $k-j$. 

We obtain: 

\begin{align*}
(*) \sum_{i=j+1}^{k}d^j_i \le \sum_{i=j+1}^{k}d^{j-1}_i + b+k-j.
\end{align*}

By our definition of $d^j_i$ and choice of $a_i$ to maximize the degree in the Breaker's graph,
at the start of the $j^{th}$ turn,
\begin{align*}
\forall j\le i \le k,  d^{j-1}_i \le d_B(a_i) \le d_B(a_j) \le d^{j-1}_j+(k-j).
\end{align*}
 
Summing up and dividing by $k-j+1$, we obtain
\begin{align*}
d^{j-1}_j \ge \frac{\sum_{i=j}^{k}d^{j-1}_i}{k-j+1} -(k-j).
\end{align*}
 
Thus, noting $\sum_{i=j+1}^{k}d^{j-1}_i = \sum_{i=j}^{k}d^{j-1}_i + d^{j-1}_j$:
\begin{align*}
\sum_{i=j+1}^{k}d^{j-1}_i\le \frac{k-j}{k-j+1}\sum_{i=j}^{k}d^{j-1}_i+(k-j).
\end{align*}
 
Combining this with $(*)$ yields: 
\begin{align*}
\frac{\sum_{i=j+1}^{k}d^{j}_i}{k-j} \le \frac{\sum_{i=j}^{k}d^{j-1}_i}{k-j+1}+\frac{b}{k-j}+2.
\end{align*}
Hence 
\begin{align*}
p(k) \le p(1)+ 2k+b\sum_{r=1}^{k} \frac{1}{k} \le O(\frac{n}{\sqrt{\log{n}}})+b(\log{n}).
\end{align*}

Since $b<\frac{n}{\log{n}}-\frac{f(n)n}{(\log{n})^{5/4}}$ and $\ell=\lceil \sqrt{f(n)} \rceil =o(f(n))$,
the desired result follows. 
\end{proof}

It remains to show that the set of trees created by the end of Stage 1 have the properties required 
by Lemma \ref{stage1lemma}. 

Note that $p$ is necessarily even upon completion of the phase - as $n$ is even, the set of vertices in the matchable trees is even, and by construction all trees in $t$ have an odd number of vertices,
so we must have an even number of trees.

Clearly there are no leaves of any tree in ${\cal T}_i$  that are troublesome when the stage terminates,  
as termination only occurs in Case 1. Furthermore, 
since Case 1 b)  was not carried out in the last iteration every tree in ${\cal T}_i$ has 
$\ell>1$ leaves.

Our construction also ensures:
\begin{enumerate}
    \item all our leaves  are at even depth (except in the middle of a 2-turn step),
\item  whenever we add a node at odd depth to the tree  we always add exactly one child along with it, 
\item  whenever we make a  node at even depth into a nonleaf we add exactly two children underneath it if it is nontroublesome(Case 1b)) and one vertex if it is troublesome (Case 2), and 
\item we never add children under a nonleaf except in the step in which it becomes a nonleaf.  
\end{enumerate}

Thus, at the end of the Stage, ${\cal T}_i$ consists of an even number 
$p$ of nice trees. It remains to bound $p$.

 We have shown above, that there are always at most  $\frac{4n}{\sqrt{\log n}}+1$ non-singleton trees. Thus,  at termination, $|{\cal T}_i| \le \frac{4n}{\sqrt{\log n}}+1$. 
 
 It remains to show that $p_i$ is always at least $\frac{n}{\sqrt{\log{n}}}$.  We note that 
 $p_i$ can only be reduced in Cases 1a) or 2b) and, never by more than 2. Now, Case 1a) only  applies  if $p_i > \frac{2n}{\sqrt{\log n}}$
 so  $p_i$ cannot be reduced below $\frac{n}{\sqrt{\log{n}}}$.
Furthermore, as shown above, when we carry out Case 2b) by choosing an edge from some $v$, there are at least $\frac{16\ell n}{\sqrt{\log{n}}}+4l+\frac{20n}{\sqrt{\log{n}}}$ vertices $w$ such that $vw$ is not an edge of Breaker's graph. 
As noted above, at least  $\frac{8n}{\sqrt{\log{n}}}$ of these are in singleton trees or matching edges. 
Further, by our bound on the number of troublesome vertices,
fewer than $ \frac{4n}{\sqrt{\log{n}}}$ of these are in matching edges that contain a troublesome vertex.
Finally no other matching edge contains one of these vertices as otherwise we would have carried out Case 2a).
It follows that there are at least $ (\frac{4n}{\sqrt{\log n}})$ singleton trees whenever we carry out Case 2b). 
Thus, we obtain $ p_i > \frac{4n}{\sqrt{\log n}}-1$ upon completion of the phase. So Lemma \ref{stage1lemma} has been proved.

\subsection{Proof of Lemma \ref{stage2lemma}}

We  now present and analyze  the strategy that Maker will use in the second phase: Strategy 2. 

  In  doing so we consider an auxiliary multigraph $F'$, whose structure depends on a
  specific graph $F$  satisfying the hypotheses of Lemma \ref{stage2lemma} that Maker has constructed.  For each component $T_i$ of $F$ that is a nice tree, we let $S_i$ be the set  $\ell$ leaves of $T_i$.  $F'$ has vertices $v_1,...,v_p$. The number of edges 
  between $v_i$ and $v_j$ is the number of unchosen edges between $S_i$ and $S_j$. We note that there are at most $\ell^2$ edges between 
  two vertices  of $F'$ and since no vertex in any $S_i$ is troublesome,  the minimum degree of $F'$ is at least:
  
  $\ell^2(p-1)-\frac{\ell n}{\sqrt{\log{n}}} > f(n){\sqrt{\log{n}}}(\frac{n}{\sqrt{\log{n}}}-1) - \frac{\ell n}{\sqrt{\log{n}}} >
  \frac{f(n) n}{4}$ which is  $\omega(n)$.
  Thus, although $b=\omega( \frac{|V(F')|}{\log{|V(F')|}} )$ we have that $b=o(\frac{\delta(F')}{\log{\delta(F')}})$. This allows us to  adapt the strategy of \cite{Kri11} to show that Maker can choose  a Hamilton cycle  in $F'$ in $14p$ moves.
  This Hamilton cycle contains a  perfect matching corresponding to a matching containing exactly one leaf from each component of $F$ which is a nice tree, so we have proven Lemma \ref{stage2lemma}. 
  
  As noted in \cite{Kri11}, it is enough to present a random strategy that creates a Hamilton cycle with positive probability in 
  $14p$ steps.  For if Maker cannot ensure he always creates a Hamilton cycle in $14p$ turns, then Breaker can ensure he never 
  does so. 
  
We consider applying the  2 phase strategy given below to $F'$. Note that at the start neither Maker nor Breaker
has chosen any edges of $F'$. We define $d_B$ and $d_M$ as before. When Maker chooses an edge, she assigns it a direction, we 
let $d^+_M(v)$ be the outdegree of $v$ in the resultant digraph and define the {\it danger} of $v$ to be $d_B(v)-2bd^+_M(v)$. 
\vskip0.2cm

Strategy 2: 
\vskip0.2cm
Phase 1: While there is a vertex whose outdegree in the Maker digraph is less than $10$, Maker  chooses such a vertex $v$ so as to maximize its  danger.
If possible Maker chooses an edge out of $v$ uniformly at random from all the unchosen edges incident to $v$, to add to Maker's digraph. 
Otherwise, Maker concedes defeat. 
\vskip0.2cm
Phase 2: We let $P$ be a longest path in Maker's graph chosen if possible to induce a cycle. If there is a cycle through $V(P)$ then Maker stops, 
winning if this is a Hamilton cycle and losing otherwise. Otherwise,  Maker looks for a path $Q$ with the same vertex set as $P$ such 
that some edge between the endpoints of $Q$ has not been chosen. If she finds such a path she chooses the edge between its endpoints, otherwise she 
concedes defeat. 
\vskip0.4cm

We let $M^*$ be the graph chosen by Maker at the end of Phase 1. We will show: 

\begin{lemma}
\label{anotherlemma1}
With positive probability $M^*$ is connected and for any longest path $P$ of any supergraph $M'$ of $M^*$, 
either $P$ is Hamiltonian and $M'$ has a Hamiltonian cycle or 
the set $\{e|e \in E(F'),~ s.t.~ M'+e ~contains~a~cycle ~on ~V(P)\}$ has size at  least $\frac{11pn}{\log{n}}+1$. 
\end{lemma}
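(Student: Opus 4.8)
The plan is to treat $M^*$ as a random graph of minimum out-degree $10$, prove that it is with high probability a connected expander, and then obtain the booster estimate by the P\'osa rotation--extension technique of \cite{Kri11} as a purely deterministic consequence; everything will be comfortable numerically because $p\ge\frac{n}{\sqrt{\log n}}$ forces $\frac{11pn}{\log n}\le\frac{11p^2}{\sqrt{\log n}}=o(p^2)$. The first thing I would check is that Maker never concedes during Phase~1, so that $M^*$ is well defined with minimum out-degree exactly $10$: this is the danger/potential argument of \cite{Kri11}, entirely parallel to the Case~2 potential argument already used above for Stage~1, and it uses $b=o(\delta(F')/\log\delta(F'))$. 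The only randomness I then use is the following: when Maker adds an out-edge at a vertex $v$ she picks it uniformly among at least $(1-o(1))\delta(F')$ unchosen edges at $v$; since $v$ sends at most $\ell^2$ parallel edges to any fixed $v_j$ and $\delta(F')\ge(1-o(1))\ell^2(p-1)$, \emph{conditionally on all of Breaker's (adaptive) moves and all of Maker's earlier moves}, the new out-neighbour of $v$ equals any fixed vertex with probability at most $\frac{1+o(1)}{p-1}$.

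Next I would show that with probability $1-o(1)$ the graph $M^*$ is connected and satisfies $|N_{M^*}(S)|\ge 3|S|+1$ for every $S\subseteq V(F')$ with $1\le|S|\le\epsilon p$, for a suitable absolute constant $\epsilon>0$. This is the routine first-moment computation: if the expansion fails at level $s$ there are sets $S$ ($|S|=s$) and $Y$ ($|Y|\le 3s$) with $N^+_{M^*}(S)\subseteq Y$, which by the conditional near-uniformity above has probability at most $(4s/p)^{10s}$; multiplying by $\binom{p}{s}\binom{p}{3s}$ and summing over $1\le s\le\epsilon p$ gives $o(1)$ once $\epsilon$ is chosen so small that $\tfrac{e^4 4^{10}}{27}\epsilon^6<\tfrac12$ (split the sum at $s=\sqrt p$: the small-$s$ terms are $o(1)$ because of the factor $p^{-6s}$, the large-$s$ ones by geometric decay). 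The analogous estimate shows that with probability $1-o(1)$ there is no $S$ with $\epsilon p<|S|\le p/2$ and $N_{M^*}(S)\subseteq S$, and together with the expansion this rules out disconnectedness. Fix henceforth an outcome in which $M^*$ is connected and has this expansion property; this already gives the ``$M^*$ is connected'' part of the lemma.

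For the rest, let $M'$ satisfy $M^*\subseteq M'\subseteq F'$ and let $P$ be a longest path of $M'$; since $M^*\subseteq M'$, the graph $M'$ is also connected and has the expansion property. If $M'$ has a cycle on $V(P)$, then either $P$ is Hamiltonian, so this cycle is a Hamilton cycle of $M'$ and the first alternative holds, or $P$ is not Hamiltonian, in which case connectivity of $M'$ lets us attach a vertex outside $V(P)$ to that cycle and get a path on $|V(P)|+1$ vertices, contradicting maximality. So assume $M'$ has no cycle on $V(P)$. Fix an endpoint $a$ of $P$ and let $R_a$ be the set of endpoints of longest paths of $M'$ obtained from $P$ by rotations keeping $a$ fixed. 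By P\'osa's lemma $|N_{M'}(R_a)|\le 3|R_a|$ (the precise constant is classical and immaterial here), so the expansion property forces $|R_a|>\epsilon p$. For each $b'\in R_a$ pick such a longest path from $a$ to $b'$ and rotate again keeping $b'$ fixed, obtaining a set $R_{b'}$ with $|R_{b'}|>\epsilon p$ and $b'\notin R_{b'}$; for every $a'\in R_{b'}$ we have $a'b'\notin E(M')$ (otherwise the corresponding path together with $a'b'$ would be a cycle on $V(P)$) and $M'+a'b'$ has a cycle on $V(P)$, so $a'b'$ is a booster whenever $a'b'\in E(F')$. Each unordered pair $\{a',b'\}$ arises at most twice this way, and since $\delta(F')\ge\ell^2(p-1)-\frac{\ell n}{\sqrt{\log n}}$ each vertex of $F'$ is non-adjacent in $F'$ to at most $\frac{n}{\ell\sqrt{\log n}}=o(p)$ others, so at most $o(p^2)$ of the pairs obtained fail to lie in $E(F')$. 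Hence there are at least $\tfrac12|R_a|\cdot\epsilon p-o(p^2)>\tfrac13\epsilon^2 p^2$ boosters, and since $\frac{11pn}{\log n}=o(p^2)$ this exceeds $\frac{11pn}{\log n}+1$ for $n$ large.

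The delicate part is the first step: confirming that Maker never has to concede in Phase~1 for the whole range of $b$, and --- more importantly for the first-moment estimate --- that her random choices are ``uniform enough'' in the precise conditional sense stated, since it is exactly this conditional near-uniformity that licenses the union bounds. The rotation--extension count is classical and is run essentially verbatim, and the final arithmetic has a great deal of slack.
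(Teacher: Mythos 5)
Your proposal follows essentially the same route as the paper: a danger/potential argument bounding Breaker's degree at low out-degree vertices (the paper's Lemma \ref{lowdegreelemma}) to get near-uniformity of Maker's random out-edges, a first-moment union bound giving expansion plus connectivity (Lemma \ref{expansion}), P\'osa rotations applied twice to produce $\Omega(p^2)$ endpoint pairs (Lemma \ref{usingexpansion}), and a count of $F'$-edges between such pairs via $\delta(F')$ to beat $\frac{11pn}{\log n}+1$. The differences (3-expansion versus the paper's 2-expansion, and your explicit treatment of the case where $M'$ already has a cycle on $V(P)$) are cosmetic, so the argument is correct and matches the paper's proof.
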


Now, there are at most $10p$ turns in the first phase. Furthermore, 
for every turn in the second phase in which Maker chooses an edge $e$, if $V(P)$ 
is not the vertex set of a component, then the length of the longest path 
in Maker's graph increases as in the new Maker's graph there  is a path using 
the vertices of $P$ and any vertex joined to $P$ by an edge. So, if $M^*$ is 
connected, then the second phase has at most $p$ turns and if it stops because 
$V(P)$ is a cycle, then this is a Hamiltonian cycle. 

So, if $M^*$ is connected, Breaker chooses at most $\frac{11pn}{\log{n}}$ edges in total.
Hence, if in addition for any longest path $P$ of any supergraph $M'$ of $M^*$ 
either $P$ is Hamiltonian and $M'$ has a Hamiltonian cycle or 
the set $\{e|e \in E(F'),~ s.t.~ M'+e ~contains~a~cycle ~on ~V(P)\}$ has size at  least $\frac{11pn}{\log{n}}+1$
then Phase 2 must terminate with the construction of a Hamiltonian cycle. 

Thus Lemma \ref{anotherlemma1} implies Lemma \ref{stage2lemma}, and it remains to prove it.

\subsection{The Proof of Lemma \ref{anotherlemma1}}
\label{thedetails}

The first  key result is similar to one given by Gebauer and Szabo in \cite{GS09}. The only differences are 
that we consider a multigraph, we use $d^+_M$ instead of $d_M$, we have a slightly weaker bound on $b$, and we 
obtain a weaker bound on $d_B$. This latter fact significantly simplifies the proof which uses 
a potential function argument, similar to, but slightly more complicated than  
that used in the proof of Lemma \ref{stage1lemma}.

\begin{lemma}
\label{lowdegreelemma} 
For sufficiently large C, If Maker applies  Strategy 2 to an input satisfying the hypotheses of Lemma \ref{stage2lemma}, then throughout the algorithm  every  vertex of $F'$ with 
$d^+_M(v) <10$ satisfies $d_B(v) < 3n$.
\end{lemma}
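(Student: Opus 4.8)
The plan is to mimic the potential-function argument from the proof that Maker can always move in Stage 1, but applied to the quantity $d_B(v)$ for the low-outdegree vertices chosen in Phase 1 of Strategy 2. First I would observe that only Phase 1 matters: in Phase 2 every vertex Maker touches already has $d^+_M \ge 10$, so the set of vertices with $d^+_M(v) < 10$ can only shrink, and their Breaker-degrees only change by Breaker's own moves, which I will fold into the bookkeeping. So fix a vertex $v$ and the first turn at which, hypothetically, $d_B(v) \ge 3n$ while $d^+_M(v) < 10$; I want a contradiction.

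The core idea: because Maker in Phase 1 always picks a vertex of current outdegree $< 10$ \emph{maximizing its danger} $d_B(v) - 2b\,d^+_M(v)$, and each time she picks $v$ she raises $d^+_M(v)$ by one (removing $2b$ from its danger) while Breaker can add at most $b$ to any single $d_B$ per turn, the vertices she services have tightly controlled danger. I would enumerate the turns $a_1, a_2, \dots, a_k$ at which Maker makes a Phase-1 move, and for each still-low vertex $w$ track $d_i^j := d_B(a_i) - |\{a_\ell : \ell \ge j,\ a_i a_\ell \in E(F')\text{ chosen by Breaker}\}|$ exactly as in the Stage 1 proof, together with the outdegree correction. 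Since each vertex gets serviced at most $10$ times, the total number of Maker moves in Phase 1 is at most $10p$, so $k \le 10p$. Then I define a potential $p(j)$ as the running average of the $d^{j-1}_i$ over the still-unserviced indices (adjusted by $2b\,d^+_M$), show $p(1) = O(n/\sqrt{\log n})$ using the bound that at the moment a vertex is first serviced its Breaker-degree is small (the $a_i$ haven't been hit by many Breaker edges yet, and $b = o(n)$), and then bound the per-step increase of $p$ by roughly $\frac{b}{k-j} + O(b)$ — using that Breaker adds $\le b$ edges with one end in the unserviced set per turn, plus at most $k-j$ edges from $a_j$ to the unserviced set, plus the $-2b$ jump in $d^+_M(a_j)$ that Maker's own move contributes, and the maximality of danger to control the $d^{j-1}_i$ by $d^{j-1}_j$ plus $O(k-j)$. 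Telescoping gives $p(k) \le p(1) + O(bk/k) + O(bp) = O(n/\sqrt{\log n}) + O(b\log(\cdot))$ — wait, more carefully, the harmonic-type sum $b\sum \frac{1}{k-j}$ over $k \le 10p$ steps is $O(b\log p) = O(b \log n)$, and with $b < n/\log n$ this is $O(n)$, comfortably below $3n$ once the constants are tracked. That final contradiction, that $d_B(a_k) < 3n$, is the conclusion.

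The main obstacle I expect is the bookkeeping of Maker's outdegree increments inside the potential. In Stage 1 each $a_i$ was serviced exactly once, so the "$-2b\,d^+_M$" term did not appear; here a vertex can be serviced up to $10$ times, so I must either (a) treat the $10p$ servicing events as the index set (not the $p$ vertices), accepting that the same vertex appears up to $10$ times among the $a_i$, and verify the maximality-of-danger inequality still chains correctly across repeats, or (b) carry the danger $d_B(a_i) - 2b\,d^+_M(a_i)$ itself as the tracked quantity rather than $d_B$ alone. Option (b) seems cleaner: the potential becomes the running average of current dangers of unserviced low vertices; Maker's move drops the serviced vertex's danger by $2b$ (so it does not increase the average), Breaker's $b$ edges raise total $d_B$ by $\le b$ (hence the average by $\le b/(k-j)$), and the danger is always $\ge$ something because $d_B \ge 0$ and $d^+_M \le 10$ gives danger $\ge -20b$. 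Then at the end, $d^+_M(a_k) < 10$ forces $d_B(a_k) < \text{danger}(a_k) + 20b = O(n/\sqrt{\log n}) + O(b\log n) + 20b < 3n$ for large $n$. I would also need the standing facts from the excerpt — $\ell = \lceil \sqrt{f(n)}(\log n)^{1/4}\rceil$, $f(n) = o(\log\log n)$, $\delta(F') = \omega(n)$ and $b < n/\log n$ — only to see that the error terms are genuinely $o(n)$; the argument is otherwise self-contained. The "sufficiently large $C$" in the statement presumably refers to a constant buried in the adaptation of Krivelevich's strategy; I would simply carry it through the inequalities and check the $3n$ bound survives.
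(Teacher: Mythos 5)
Your plan is essentially the paper's own proof: the paper also handles this with a danger-based potential (the average danger over the set $A_i$ of distinct vertices still to be serviced in Phase 1), uses maximality of danger so that Maker's move never increases the potential, uses the exact $-2b$ drop when the serviced vertex will be serviced again to cancel Breaker's contribution, bounds the total increase by a harmonic sum $\sum 2b/m \le 2b\log n = O(n)$, and ends with the potential equal to the danger of the single offending vertex, contradicting danger $> 3n - 20b$. Two small corrections for the write-up: average over the set of distinct future-serviced vertices (not the multiset of servicing events, where repeated vertices would let Breaker's per-turn contribution be inflated by their multiplicity), note that each Breaker edge can have both endpoints in this set so his per-turn contribution is at most $2b$ rather than $b$, and observe that the initial potential is simply $0$ since no edges of $F'$ have been chosen at the start of Phase 1 (your stated justification for $p(1)=O(n/\sqrt{\log n})$ is unnecessary and borders on circular).
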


\begin{proof} 
 We assume for a contradiction that after some number $k$ of moves in the first phase, there is a vertex $v$ 
 with $d_M^+(v)<10$ and $d_B(v)>3n$ this implies that the danger of $v$ exceeds $\frac{5n}{2}$.
 
 We let $a_i$ be the vertex from which Maker chooses an edge in the $i^{th}$ turn and let 
 $A_i$ be the set consisting of the vertices appearing in $\{a_i,...,a_k\}$.  We note this is a set 
 even though $\{a_i,...,a_k\}$ may be a multiset. 
 
 We let the potential after $i-1$ turns be the average danger of the elements in $A_i$.
 We note that Breaker's $i^{th}$ turn increases this potential by at most $\frac{2b}{|A_{i+1}|} $
 We note that if $A_i$ and $A_{i+1}$ are the same set (i.e. if $a_i=a_j$ for some $j>i$),
 then Maker's $i^{th}$ turn decreases this potential by exactly $\frac{2b}{|A_i|}$.
 Moreover, since $a_i$ has maximum danger in $A_i$, Maker's turn never increases 
 the potential.

 Thus, unless $|A_i|>|A_{i+1}|$, the potential does not increase in the $i^{th}$ turn.
 Furthermore, the total increase in the potential in turns such that  $|A_i|>|A_{i+1}|$ is at most
 \begin{align*}
     \sum_{k=1}^{|A_1|} \frac{2b}{k} \le \sum_{k=1}^n \frac{2b}{k} \le b(\log{n}) \le 2n.
 \end{align*}
 The desired result follows.
 \end{proof}

Now, $F'$ has minimum degree $\omega(n)$, so this lemma implies that every random choice of a directed edge that  Maker makes 
is not that different from simply choosing a random vertex of $F'$ as an out-neighbour. This us allows us to use standard techniques to 
show the following two lemmas.

\begin{definition}
For any $S \subseteq V(F')$, we define $N(S)=N_{M^*}(S)$ to be those vertices outside of S joined to some vertex of S by an edge 
of $M^*$. 
\end{definition}

  \begin{lemma}
  \label{expansion}
  If Maker adopts   Strategy 2, then with probability $1-o(1)$, 
  for every  $S \subseteq V(F')$, if $|S| \le \frac{p}{2}$ then $N(S)$ is nonempty
  while if  $|S| \le \frac{p}{100}$,
  then $|N(S)| >2|S|$. 
  \end{lemma}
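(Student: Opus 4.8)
The plan is to use the standard expansion-via-random-choices argument, leveraging Lemma \ref{lowdegreelemma} to control the distribution of Maker's random out-neighbours. First I would fix a target set $S$ with $|S| = s$ and a ``forbidden'' set $B \supseteq S$ with $|B| \le 3s$ (the case $|B|=s$ handles the first statement, $|B| = 3s$ the second), and estimate the probability that $N_{M^*}(S) \subseteq B$, i.e. that every one of the $10s$ directed edges Maker draws out of vertices of $S$ in Phase~1 lands inside $B$. When Maker picks a random out-edge from a vertex $v \in S$ with $d_M^+(v) < 10$, Lemma \ref{lowdegreelemma} guarantees $d_B(v) < 3n$; since the minimum degree of $F'$ is $\omega(n)$ (indeed $\ge f(n)n/4$) and at most $10$ previous Maker-edges are incident to $v$, the number of available edges out of $v$ is at least $\delta(F') - 3n - 10 \ge \delta(F')/2$, while the number of available edges from $v$ into $B$ is at most $\ell^2 |B| \le \ell^2 \cdot 3s$. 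Hence each such random draw lands in $B$ with probability at most $p_0 := 6\ell^2 s / \delta(F')$, \emph{conditionally on the history}, so the probability that all $10s$ relevant draws land in $B$ is at most $p_0^{10s}$.

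Next I would take a union bound over all choices of $S$ and $B$. The number of pairs $(S,B)$ with $|S|=s$, $|B| \le cs$ is at most $\binom{p}{s}\binom{p}{cs} \le (ep/s)^{s}(ep/(cs))^{cs} \le (p/s)^{O(s)}$. Combining, the failure probability for a fixed $s$ is at most
\begin{align*}
\left(\frac{p}{s}\right)^{O(s)} \left(\frac{6\ell^2 s}{\delta(F')}\right)^{10 s}.
\end{align*}
Using $\delta(F') \ge \ell^2(p-1) - \ell n/\sqrt{\log n} \ge \tfrac12 \ell^2 p$ (valid since $\ell^2 = \Theta(f(n)\sqrt{\log n})$ and $p \ge n/\sqrt{\log n}$, so $\ell^2 p \gg \ell n/\sqrt{\log n}$), we get $6\ell^2 s/\delta(F') \le 12 s/p$, so the bound becomes $(p/s)^{O(s)} (12 s/p)^{10 s} = (s/p)^{\Omega(s)} \cdot 12^{10s}$, which is at most $2^{-s}$ provided $s/p$ is bounded away from a small constant — precisely the regime $s \le p/2$ (for the ``$N(S)$ nonempty'' claim, where we set $c=1$ and ask that \emph{no} out-edge of $S$ escapes $S$; here actually one should be slightly more careful and note $|N(S)|=0$ forces all $10s$ edges inside $S$, probability $\le (6\ell^2 s/\delta(F'))^{10s}$ as above) and $s \le p/100$ (for the ``$|N(S)| > 2|S|$'' claim, $c=3$). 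Summing the geometric-type series $\sum_{s\ge 1} 2^{-s} = o(1)$ — with a little extra care for the smallest values of $s$, where one uses that $6\ell^2 s/\delta(F') = o(1)$ outright — gives the claimed $1-o(1)$ probability.

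The main obstacle I anticipate is \emph{not} the counting but correctly setting up the conditioning so that the ``each draw lands in $B$ with probability $\le p_0$'' bound is legitimate: Maker's choice of \emph{which} vertex to play from (the maximum-danger vertex) and the adversarial Breaker moves are interleaved with the random draws, so one must argue that, conditioned on the entire history up to the moment a particular out-edge of a vertex $v \in S$ is chosen, the edge is uniform over the currently-available edges at $v$, and Lemma \ref{lowdegreelemma} applies to that moment since $d_M^+(v) < 10$ there. One then multiplies the conditional bounds along the (at most $10s$) relevant draws. A secondary subtlety is that a vertex of $S$ may supply several of the $10$ out-edges, and some relevant draws may occur before others; none of this affects the product bound, but it should be stated cleanly. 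Everything else — the inequality $\delta(F') \ge \tfrac12\ell^2 p$, the binomial estimates, and the final summation — is routine.
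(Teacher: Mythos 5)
Your overall plan coincides with the paper's: fix $S$ together with a candidate set covering $N(S)$, use Lemma \ref{lowdegreelemma} and the minimum degree of $F'$ to bound, conditionally on the history, the probability that each of the $10|S|$ random out-draws from $S$ lands in the forbidden set, then take a union bound and split into the ranges $|S|\le \frac{p}{100}$ and $\frac{p}{100}<|S|\le\frac{p}{2}$. The conditioning point you raise is handled in the same (implicit) way in the paper, and your remark about the smallest values of $|S|$ matches the paper's observation that $|S\cup N(S)|\ge 11$. However, there is a genuine quantitative gap in your treatment of the first assertion ($N(S)\neq\emptyset$ for all $|S|\le\frac{p}{2}$). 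Your per-draw bound is $\ell^2|B|$ over $\delta(F')/2$ with $\delta(F')\ge\frac{1}{2}\ell^2p$, i.e.\ at least a factor $4$ (and, with your $p_0$ as written, a factor $12$) larger than the true order $|B|/p$. When $|S|=s$ is a constant fraction of $p$, the number of sets $\binom{p}{s}$ is $e^{\Theta(s)}$, and a constant per-draw loss factor $\alpha$ leaves you with a bound of the shape $\left[\frac{ep}{s}\power{\frac{\alpha s}{p}}{10}\right]^{s}=\left[e\,\alpha^{10}\power{\frac{s}{p}}{9}\right]^{s}$; with $\alpha=4$ this already exceeds $1$ for $s/p$ near $0.2$, and at $s=\frac{p}{2}$ the per-draw ``probability'' $4s/p=2$ is vacuous, so the union bound yields nothing on a range the lemma explicitly requires. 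Your proviso ``provided $s/p$ is bounded away from a small constant'' in effect concedes this: it does not cover $s$ up to $\frac{p}{2}$.

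The gap is repairable, and the repair is exactly what the paper does. Since $3n$, the Stage-1 Breaker edges at the $\ell$ non-troublesome leaves (at most $\ell n/\sqrt{\log n}$), and Maker's own Phase-1 edges (at most $10p$ in total --- note, incidentally, that $v$ may carry many Maker \emph{in}-edges, not ``at most $10$'' incident Maker edges, though this is immaterial) are all $o(\ell^2p)=o(\delta(F'))$, the conditional probability that a draw from $v\in S$ lands in $S\cup A$ is at most $(1+o(1))\frac{|S\cup A|}{p}$, e.g.\ $\frac{10}{9}\cdot\frac{|S\cup A|}{p}$. With this sharper estimate the entropy of choosing $S$ is beaten even at $s=\frac{p}{2}$, since $e\power{\frac{10}{9}}{10}2^{-9}<1$; this is precisely the paper's bookkeeping with its exponent $7j-k$ and constants $\power{\frac{10}{9}}{8j}(2e)^{j+k}$. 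Your second case ($|S|\le\frac{p}{100}$, $|N(S)|\le 2|S|$) does survive your cruder constants, so only the large-$|S|$ connectivity case needs the tighter computation.
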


\begin{proof}
Again we modify an argument of Krivelevich. We will give an upper bound to the probability of having $S$ and $N(S)=A$ such that $S\cup A \subseteq V(F')$, $A\cap S=\emptyset$, and $A$ is small enough for the conclusion of the lemma to be false. In order to do so, we will use the union bound on that probability for fixed $S$ and $N(S)$. 

Fix $j$ and $k$. Then, by using the Stirling's approximation to bound $\binom{p}{j+k}$, the number of choices for $S$ and $A$ such that $|S|=j$ and $|A|=k$ and $|S\cap A|=0$ is:
\[\binom{p}{j+k}\binom{j+k}{j}
    \le \binom{p}{j+k}2^{j+k}
    \le \power{\frac{p}{j+k}}{j+k} (2e)^{j+k}\]

Noting that $\forall v\in S$ Maker chooses 10 edges in the first phase from $v$.
Since none of the leaves corresponding to $v$ in the first phase were troublesome, by Lemma \ref{lowdegreelemma} the number of unchosen edges out of $v$ when we choose those edges is at least $p\ell^2-\frac{\ell n}{\log{n}}-3n$. Hence, since $\ell=\ceil{\sqrt{f(n)}\power{\log n}{\frac{1}{4}}}=\omega(1)\cdot \power{\log n}{\frac{1}{4}}$ and $p=\Theta\left(\frac{n}{\sqrt{\log{n}}}\right)$ the probability that an edge we pick is from $v$ to a vertex in $S\cup A$ is at most 
\begin{align*}
  \frac{\ell^2(|S|+|A|)}{pl^2-\frac{\ell n}{\log{n}}-3n}= \frac{j+k}{p-\frac{n}{\omega(1)\cdot \power{\log n}{\frac{1}{4}}\log{n}}-3\frac{n}{\omega(1)\cdot \sqrt{\log n}}}=\frac{j+k}{p-o(p)}\le  \frac{j+k}{\frac{9p}{10}}=\frac{j+k}{p}\frac{10}{9}.
\end{align*}
  
  Noting that we are only interested in the cases that violate the conclusion of the lemma, so $\frac{j+k}{p}<\frac{9}{10}$. 
  
  Given that we are choosing $10j$ out edges out of our set $S$ during Strategy 2, the probability that we make choices so that $N(S) \subseteq  A$ is less than
   \begin{align*}
   \power{\frac{j+k}{p}}{10j}
   \power{\frac{10}{9}}{10j}
   \leq
   \power{\frac{j+k}{p}}{8j} \power{\frac{10}{9}}{8j}.
   \end{align*}
   
  Thus, by the union bound, the probability that there exists a set $S$ of $j$ nontroublesome vertices of $V(F')$ such that $|N(S)|=k$ is bounded above by: 
  
  \begin{align*}
  \power{\frac{j+k}{p}}{7j-k} \power{\frac{10}{9}}{8j}(2e)^{j+k}.
  \end{align*}
  
  Recall that the sum of the $t$ first terms of a geometric sequence defined by $a_0$ and $a_{i+1}=\lambda a_i$ with $\lambda\in (0,1)$ is: $a_0\cdot \frac{1-\lambda^{t+1}}{1-\lambda}=a_0\cdot O(1)$. We will now study 2 cases, when $j\leq \frac{p}{100}$ and when $j>\frac{p}{100}$. 
  
  If we have a set $S$ of size $j \le \frac{p}{100}$ that violates the conclusion of the lemma, then the size of $N(S)$ must be $k<2j$, so the probability is bounded above by: 

  \begin{align*}
  \power{\frac{j+k}{p}}{j} \power{\frac{1}{25}}{3j} \power{\frac{200e}{81}}{4j} \le \power{\frac{j+k}{p}}{j} \power{\frac{1}{5}}{j}
  \end{align*}
  Assume that $i=(j+k)$ is fixed, then since $k\leq 2j$, we can conclude that $i\geq j\geq \frac{i}{3}$. Considering just one vertex of $S$ we know that $i=|S \cup N(S)|\geq 11$ so we must also have $j\geq 4$. So by summing over $j$ we get:
 
 \[\sum_{j=\max\left\{4,\ceil{\frac{i}{3}}\right\}}^{i} \power{\frac{i}{5p}}{j}=\power{\frac{i}{5p}}{\max\left\{4,\ceil{\frac{i}{3}}\right\}}\cdot O(1)\]
 
 Noting that $\power{\frac{i}{5p}}{\max\left\{4,\ceil{\frac{i}{3}}\right\}}$
 is less than $\frac{1}{25p^2}$ when $i\leq \sqrt{p}$ and less than $\power{\frac{1}{25}}{\sqrt{p}}$ otherwise since $j+k\leq 4j\leq \frac{p}{25}$ by considering whether $i$ is greater or less than $\sqrt{p}$, by summing over $i$ we get:
 
 \[\sum_{i=11}^{\frac{p}{25}} \power{\frac{i}{5p}}{\ceil{\frac{i}{3}}}\cdot O(1)\leq  \frac{p}{25}\left(\frac{1}{25p^2}  +\power{\frac{1}{25}}{\sqrt{p}}\right)\cdot O(1)=o(1)\]
  
So, the probability of the conclusion failing for some $S$ such that $|S|=j$, $j\leq \frac{p}{100}$ is $o(1)$. 

Otherwise, $\frac{p}{100} \le  j \le \frac{p}{2}$ and the only case for $k$ that violates the conclusion of the lemma is $k =0$. In this case, the probability that there is some set $S$ of $j$ nontroublesome vertices of $V(F')$ such that $N(S)=\emptyset$ can be bounded above by: 
\begin{align*}
    &\power{\frac{j+k}{p}}{7j-k} \power{\frac{10}{9}}{8j} (2e)^{j+k}\\
    = &\power{\power{\frac{j}{p}}{7} \power{\frac{10}{9}}{8} 2e}{j}\\
   \le &\power{\frac{20e}{9}\cdot \power{\frac{5}{9}}{7}}{j}\\
   \le &\power{\frac{1}{100}}{j}
\end{align*}
Given our bounds on $j$, by summing over $j$ and using the sum of the first terms of a geometric sequence, we get at most $\power{\frac{1}{100}}{\frac{n}{100\sqrt{\log n}+100}}\cdot O(1)$ which is $o(1)$.

\end{proof}

  \begin{lemma}
  \label{usingexpansion}
  If %for
  every  $S \subseteq V(F')$ with  $|S| \le \frac{p}{100}$ satisfies  $|N(S)| >2|S|$
  then the following holds: 
  
  (*) for any endpoint $u$ of any  longest path $P$ of any supergraph $M'$ of $M^*$ there are at least $\frac{p}{100}$
  vertices $w$ such that $u$ and $w$ are the endpoints of a path on $V(P)$. 
  \end{lemma}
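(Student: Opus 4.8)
The plan is to use the standard Pósa rotation argument. Let $P$ be a longest path in $M'$ with endpoint $u$, and let the other endpoint be $w_0$. Define the set $R$ of \emph{endpoints reachable from $u$ by rotations}: a vertex $w$ is in $R$ if there is a path on $V(P)$ with endpoints $u$ and $w$. We want to show $|R| \ge \frac{p}{100}$. First I would record the key consequence of $P$ being longest: every neighbour (in $M'$) of every vertex in $R$ lies on $P$, since otherwise we could extend the corresponding path and contradict maximality. In particular $R \cup N_{M'}(R) \subseteq V(P)$, and since $M^* \subseteq M'$ we certainly have $N_{M^*}(R) \subseteq V(P)$ as well.

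Next I would invoke the classical Pósa lemma on rotations: if $w \in R$ realized by a path $P_w$ with endpoints $u, w$, and $xw \in E(M')$ for some interior vertex $x$ of $P_w$, then rotating at $x$ (deleting the edge of $P_w$ from $x$ towards $w$ and adding $xw$) yields a path on $V(P)$ with endpoints $u$ and one of the two neighbours of $x$ along $P_w$. Pósa's counting observation is that the set of endpoints obtainable this way satisfies $N_{M'}(R) \subseteq R \cup B$, where $B$ is a set of ``boundary'' vertices with $|B| \le 2|R|$ (each vertex of $R$ contributes at most two boundary vertices on $P_w$). Hence $|N_{M'}(R)| \le 3|R|$, and a fortiori $|N_{M^*}(R)| \le 3|R|$.

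Now I would assume for contradiction that $|R| < \frac{p}{100}$ — in fact, since rotations only ever move among vertices of $R$ and $R$ is closed under the rotation operation once we take its closure, $|R| \le \frac{p}{100}$ suffices to apply the hypothesis. The hypothesis of the lemma gives $|N_{M^*}(R)| > 2|R|$, which is consistent with $|N_{M^*}(R)| \le 3|R|$, so I need to squeeze a little harder. The right way is to take $R$ to be the full rotation closure and observe, as in Krivelevich's and Pósa's treatments, that one actually gets $N_{M^*}(R) \subseteq R$ (the boundary vertices are themselves endpoints of rotated paths, hence already in $R$); combined with $|N_{M^*}(R)| > 2|R|$ this forces $|R| > \frac{p}{100}$, since a set $R$ of size at most $\frac{p}{100}$ with $N_{M^*}(R) \subseteq R$ would have $|N_{M^*}(R)| \le |R| \le 2|R|$, contradicting the expansion hypothesis. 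This yields $|R| > \frac{p}{100}$, which is exactly (*).

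The main obstacle I anticipate is getting the rotation bookkeeping exactly right: one must be careful that the boundary vertices produced by a rotation are genuinely endpoints of legitimate paths on all of $V(P)$ (so that they lie in $R$), and that this holds for the \emph{closure} of $R$ under rotation, not just one round. Once the closure $R$ satisfies ``every $M^*$-neighbour of $R$ is in $R$'', the expansion hypothesis $|N(S)| > 2|S|$ for $|S| \le \frac{p}{100}$ immediately rules out $|R| \le \frac{p}{100}$, and we are done. The only mild subtlety is that $M'$ may have more edges than $M^*$, but since extra edges can only enlarge $R$, working with $N_{M^*}(R) \subseteq R$ is the conservative and correct move.
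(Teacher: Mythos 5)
Your overall plan (Pósa rotations plus the expansion hypothesis) is the right one -- the paper itself disposes of this lemma by citing Lemma 6.3.3 of the Positional Games book, which is exactly this rotation argument -- but the key counting step in your write-up is wrong, and the ``fix'' you propose is false. First, the bound you actually derive, $|N_{M'}(R)| \le 3|R|$, does not contradict the hypothesis $|N(S)| > 2|S|$, as you yourself notice. Second, the claim you then lean on, $N_{M^*}(R) \subseteq R$, cannot be what P\'osa gives: with the paper's definition $N(S)$ is the \emph{external} neighbourhood, so $N_{M^*}(R) \subseteq R$ would say $N_{M^*}(R) = \emptyset$, i.e.\ that the rotation closure $R$ is a union of components of $M^*$. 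That is false in general; already for a longest path $v_0v_1v_2v_3$ with no chords and $u=v_0$ one has $R=\{v_3\}$ and $N(R)=\{v_2\} \not\subseteq R$. The boundary vertices produced by rotations are path-neighbours of endpoints, not themselves endpoints, so they need not lie in $R$.

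The correct form of P\'osa's lemma, which is what you need, is: if $R$ is the set of endpoints obtainable from $P$ by rotations with $u$ fixed, then $N_{M'}(R) \subseteq R^- \cup R^+$, where $R^\pm$ are the predecessors and successors of $R$ \emph{along the original path} $P$ (the point being that a vertex which never becomes an endpoint keeps both of its original path-neighbours throughout all rotations, so broken edges are only those incident to new endpoints). Hence $|N_{M'}(R)| \le 2|R|-1 < 2|R|$, and since $M^* \subseteq M'$ also $|N_{M^*}(R)| < 2|R|$. Now if $|R| \le \frac{p}{100}$ the expansion hypothesis applied to $S=R$ gives $|N_{M^*}(R)| > 2|R|$, a contradiction; so $|R| > \frac{p}{100}$, and every $w \in R$ is an endpoint of a path on $V(P)$ with other endpoint $u$ (longest-path maximality in $M'$ keeps all rotations inside $V(P)$). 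So your argument is repairable by replacing your per-rotated-path boundary bookkeeping, and the false containment $N_{M^*}(R)\subseteq R$, with the boundary taken along the original $P$; as written, the proof has a genuine gap at exactly the step that makes the numbers work.
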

 
 Lemma \ref{usingexpansion} is an immediate corollary of Lemma 6.3.3 of \cite{HKSSbook}, which follows from a well-known lemma of P\'{o}sa \cite{Pos76}.
  
 Now, if for every  $S \subseteq V(F')$, with $|S| \le \frac{p}{2}$, $N(S)$ is nonempty, then every component of $M^*$
  contains more than half its vertices so $M^*$ is connected. Furthermore, if (*) holds, for any longest 
  path $P$ of any supergraph $M'$ of $M^*$, there are at least $\frac{p}{100}$ vertices that are endpoints of paths on $V(P)$.
  Applying (*) to each of these paths we see that furthermore, if (*) holds, there are $\frac{p^2}{20000}$ pairs of vertices that form the 
  endpoints of paths on $V(P)$. By our lower bound on the degrees in $F'$,
  there must be $\frac{\ell^2p^2}{20000} -\frac{p\ell n}{\sqrt{\log{n}}}>\frac{11pn}{\log{n}}+1$ edges of $F'$ that join such a pair of vertices. 
  So, Lemmas \ref{expansion} and \ref{usingexpansion} imply Lemma \ref{anotherlemma1}.

\bibliography{perfect-matching}{}
\bibliographystyle{plain}

\end{document}